\documentclass[a4paper, 11pt, parskip=half]{amsart}

\usepackage{custom-template}
\usepackage{mathtools}
\usepackage{tabularx}
\usepackage{array}
\usepackage{shortcuts}
\usepackage{stmaryrd}

\begin{document}
	\title{On Lower Bounds for sums of Fourier Coefficients of Twist-Inequivalent Newforms
}
	\date{}
	\author{Moni Kumari, Prabhat Kumar Mishra, and Jyotirmoy Sengupta}

	\address{Department of Mathematics, Indian Institute of Technology Jammu, Jagti, PO Nagrota, NH-44 Jammu 181221, J \& K, India\vspace*{-5pt}}
	\email{moni.kumari@iitjammu.ac.in\vspace*{-6pt}}
	\email{2022rma0025@iitjammu.ac.in\vspace*{-6pt}}

    \address{School of Mathematical \& Computational Sciences, Indian Association for the Cultivation of Science, 2A \& B Raja S C Mullick Road, Kolkata 700032, West Bengal, India \vspace*{-5pt}}
    \email{jyotirmoysengupta671@gmail.com}

	\subjclass[2020]{Primary: 11F11; Secondary: 11F30, 11F80}
	\keywords{Fourier coefficients, Galois representations, Sato-Tate}
	
	\begin{abstract}
In this article, we address the lower bounds for the sums
$a_f(p)+a_g(p)$ of the $p$th Fourier coefficients of two twist-inequivalent, non-CM normalized newforms 
$f$ and $g$. 
Our main result shows that for such forms with integer Fourier coefficients, the largest prime factor of $a_f(p)+a_g(p)$ satisfies $ P(a_f(p)+a_g(p))> (\log p)^{1/14}(\log \log p )^{3/7-\epsilon}$
for almost all primes 
$p$ and for any $\epsilon>0$. Beyond primes, we apply Brun’s sieve to show that a similar phenomenon holds for a set of positive integers with natural density 1.
The main result is further strengthened under the Generalized Riemann Hypothesis, where we establish exponential growth for $|a_f(p)+a_g(p)|$ in terms of $p$. 

Additionally, we derive an interesting result related to the multiplicity one theorem, demonstrating that if the sum $a_f(p)+a_g(p)$
is small for a positive-density subset of primes, then $f$ and $g$
must be twist-equivalent by a quadratic character. 
\end{abstract}
	\maketitle

\section{Introduction and results}
Let ${S}_{k}(N)$ denote the space of cusp forms of trivial nebentypus, even weight $k\ge 2$, and level $N$. The study of Fourier coefficients of cusp forms has long been a central theme in number theory, yet many aspects remain mysterious.

A landmark result in this area is the Ramanujan-Petersson conjecture, which provides an upper bound on the magnitude of these coefficients. This conjecture was proven by Deligne in 1974. Specifically, if
$$f(z)=\sum_{n=1}^{\infty}a_f(n)q^n \in S_k(N), \quad q=e^{2\pi i z}, \quad {\rm Im}(z)>0$$
is a holomorphic, cuspidal, normalized Hecke newform without complex multiplication (CM), then Deligne’s theorem asserts that for any prime $p$, 
\begin{equation}\label{deligne bound}
    |a_f(p)|\le 2 p^{\frac{k-1}{2}}.
\end{equation}
Since then, extensive research has examined the sharpness of this bound and its implications.

In addition to upper bounds, the lower bounds for the magnitude of these Fourier coefficients, when they are nonzero, also present an intriguing area of investigation. 
These bounds play a crucial role in understanding the structure of these coefficients and their applications in various areas of number theory and harmonic analysis.

A key conjecture in this direction, known as the Atkin-Serre conjecture \cite{atkser}, states 
 that for a  non-CM normalized newform $f\in {S}_{k}(N)$ of weight $k\ge 4$  and for any 
 $\epsilon>0$ there exists a positive constant $c_{\epsilon,f}$  such that for all sufficiently large primes 
$p$
\begin{equation}\label{atkser}
     |a_f(p)|\ge c_{\epsilon,f} ~ p^{\frac{k-3}{2}-\epsilon}.   
\end{equation}
Gafni, Thorner, and Wong \cite[Thm. 1.1]{gaf} confirmed this conjecture for a density-one subset of primes, proving the stronger bound
 \[
            |a_f(p)| > 2 p^{\frac{k-1}{2}} \frac{\log\log p} { \sqrt{\log p}},
\]
for almost all primes.

When Fourier coefficients are integers, their size can alternatively be measured by their largest prime factor, where for a non-zero integer $n$, $P(n)$ denotes the largest prime factor of $|n|$. 
 In this direction, Murty and Murty \cite[Thm. 6.2]{rmv}, assuming the generalized Riemann hypothesis (GRH), established that for any $\epsilon>0$, $ P(a_f(p)) > e^{( \log p)^{1-\epsilon}}$ for almost all primes.
 Unconditionally, Murty, Murty, and Saradha \cite[Thm. 6.2]{MKS} showed that $P(a_f(p)) > e^{( \log \log p)^{1-\epsilon}}$ holds for almost all primes $p$.
 More recently, Bilu, Gun, and Naik \cite[Thm. 1]{gun} further strengthened this bound to
 $$P(a_f(p))>(\log p)^{1/8} (\log \log p)^{3/8-\epsilon},$$
 which holds for almost all primes.
Inspired by these results, this article investigates the sum $a_f(p)+a_g(p)$ for two distinct non-CM newforms $f\in S_k(N_1)$ and $S_k(N_2)$. Our primary goal is to establish lower bounds on the largest prime factor of this sum. 

To formulate this precisely, we introduce the notion of twist-inequivalent newforms. We say  two non-CM newforms  $f$ and $g$ are twist-inequivalent if there does not exist any primitive Dirichlet character $\chi$ such that $f= \chi \otimes g$. 
In other words, $f$ and $g$ are not related by a Dirichlet character twist.
This concept plays a crucial role in our study, leading to new results on the prime factorization of sums of Fourier coefficients of newforms.
\begin{theorem}\label{main}
     Let $f\in S_k(N_1)$ and  $g\in S_k(N_2)$ be two non-CM normalized newforms with trivial nebentypus that are twist-inequivalent and having integer Fourier coefficients $a_f(n)$ and $a_g(n)$, respectively. Then for any $\epsilon >0$, 
     \[
            P(a_f(p)+a_g(p))> (\log p)^{1/14}(\log \log p )^{3/7-\epsilon},
     \]
   holds for almost all primes.
\end{theorem}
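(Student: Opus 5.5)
The strategy adapts Bilu--Gun--Naik to the product Galois representation $\rho_{f,\ell}\times\rho_{g,\ell}$. Write $x$ large and $y=y(x)=(\log x)^{1/14}(\log\log x)^{3/7-\epsilon}$. We must show that
\[
\#\{p\le x:\ a_f(p)+a_g(p)=0 \text{ or } P(a_f(p)+a_g(p))\le y\}=o(\pi(x)).
\]
The vanishing case is handled separately. The set $\{p: a_f(p)=-a_g(p)\}$ has density zero, by the joint Sato--Tate theorem of Wong and Harris for twist-inequivalent non-CM forms, or by the Chebotarev argument below with a single prime $\ell$. So it suffices to treat primes with $a_f(p)+a_g(p)=n\neq 0$ and $P(n)\le y$.

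\textbf{Step 1: bound the exponents.} By Deligne's bound \eqref{deligne bound}, $|n|\le 4p^{(k-1)/2}$. For most primes, however, $|n|$ is not much smaller than $p^{(k-1)/2}$, and for each small prime $\ell$ we want $v_\ell(n)$ to be small. We split into two sets of primes.

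(a) Primes with $v_\ell(n)\ge m_\ell$ for some $\ell\le y$, for a threshold $m_\ell$ to be chosen. For a single $\ell$, consider the mod $\ell^{m}$ representation $\bar\rho=\rho_{f,\ell^m}\times\rho_{g,\ell^m}$ with image in $\{(A,B)\in GL_2(\mathbb Z/\ell^m)^2:\det A=\det B\}$. By the open-image theorems of Ribet and Loeffler for twist-inequivalent pairs, this image is the full group for all $\ell$ outside a finite set. For the finitely many exceptional $\ell$, the image still has bounded index. The condition $\ell^m\mid \mathrm{tr}A+\mathrm{tr}B$ is a conjugacy-stable set whose proportion is $\asymp \ell^{-m}$. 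An effective unconditional Chebotarev theorem (Lagarias--Odlyzko, with Serre's refinements or the Thorner--Zaman version) applies to the extension of degree $\ll \ell^{7m}$, since the group has dimension $7$. It gives
\[
\#\{p\le x:\ \ell^m\mid n\}\ll \pi(x)\ell^{-m}+\text{error},
\]
valid as long as $\ell^{7m}$ is at most a small power of $\log x$ times $(\log\log x)^{O(1)}$. This is exactly where the exponent $1/14 = 1/(2\cdot 7)$ should come from, replacing BGN's $1/8=1/(2\cdot4)$ for $GL_2$. The relevant degree is $\ell^{7m}$, the group order, and the square-root loss of the unconditional Chebotarev error accounts for the factor $1/2$. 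Summing over $\ell\le y$ with $\ell^{m_\ell}\approx M$ gives a contribution $\ll \pi(x)\,\pi(y)/M$.

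(b) On the complement, every prime $\ell\le y$ satisfies $\ell^{v_\ell(n)}<M$. Hence $|n|<M^{\pi(y)}$, so $\log|n|\le \pi(y)\log M\approx y\log M/\log y$. We arrange this to be $\le \log(p^{(k-1)/2}\,\delta)$ with $\delta=o(1)$, forcing $|n|$ to be abnormally small. The set $\{p:|a_f(p)+a_g(p)|\le \delta p^{(k-1)/2}\}$ has measure $O(\delta)$ under the joint Sato--Tate distribution, which is the product of two semicircle laws. To make this count effective, I would invoke the effective joint Sato--Tate of Thorner (unconditional, with error $\ll(\log\log x)^{-1/2}$-type savings, using Newton--Thorne functoriality of symmetric powers), exactly as BGN used effective Sato--Tate.

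\textbf{Step 2: optimize.} We choose $M$, and the largest admissible Chebotarev degree $\ell^{7m}\le(\log x)^{1/2}(\log\log x)^{c}$, so that both the Chebotarev main term $\pi(y)/M$ and the Sato--Tate bound tend to $0$. The constraint $y^{7}\cdot(\ldots)\ll(\log x)^{1/2}(\log\log x)^{3}$, with the log-log adjustments, should yield precisely $y=(\log x)^{1/14}(\log\log x)^{3/7-\epsilon}$. Finally, passing from $x$ to $p$ in $(x^{1/2},x]$ costs nothing.

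\textbf{Main obstacle.} The hardest step is the uniform Chebotarev count in (a). It needs the exact image of $\rho_{f,\ell^m}\times\rho_{g,\ell^m}$ for every $\ell\le y$ and all $m$, together with the determinant compatibility. It also needs the Lagarias--Odlyzko error to be controlled, including a possible Siegel zero, via the Stark bound on the discriminant given by Hensel--Serre. I would first try to reduce to Zywina/BGN's lemma on counting matrices with prescribed trace mod $\ell^m$, generalized to pairs with equal determinant.
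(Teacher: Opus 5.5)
Your route differs from the paper's, and its architecture is sound. The paper argues by contradiction, following Bilu--Gun--Naik:
\begin{itemize}
\item If the exceptional set $A$ had positive upper density, joint Sato--Tate would give $\sum_{p\in A(x_n)}\log|a_f(p)+a_g(p)|\gg x_n$ (Proposition~\ref{unbounded}).
\item On the other hand, write $\log|n_p|=\sum_\ell \nu_\ell(n_p)\log\ell$. Use the Thorner--Zaman bound $\pi^*_{f,g}(x,d)\ll\delta(d)\pi(x)$ for $d\le z=c(\log x/\log\log x)^{1/7}$, and the trivial bound $\nu_\ell(n_p)\ll\log x/\log\ell$ for primes with $\ell^{m_0}\mid n_p$. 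This gives $\sum_{p\in A(x)}\log|n_p|\ll\frac{x}{z}\sum_{\ell\le y}\ell\asymp\frac{xy^2}{z\log y}$.
\end{itemize}
The exponent $1/14$ comes from requiring $y^2/\log y=o(z)$. You instead count primes directly: an exceptional $p$ either has $\ell^{m_\ell}\mid n_p$ for some $\ell\le y$, or $|n_p|<\prod_{\ell\le y}\ell^{m_\ell}$ is abnormally small. This never weights highly divisible $n_p$ by $\log x$, which is the paper's main loss. Once the parameters are set correctly, it proves more than the theorem.

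However, your parameter bookkeeping is wrong in two places that happen to cancel.

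\textbf{The Chebotarev range.} There is no ``square-root loss''. The unconditional Lagarias--Odlyzko asymptotic (Proposition~\ref{asymptotic_pi}(a)) needs $\log x\gg h^{21}(\log hN)^2$, i.e.\ $h\lesssim(\log x)^{1/21}$, which is too short here. What you need is the Thorner--Zaman upper bound (Theorem~\ref{cosequence of cdt}), valid for $h\le z$, i.e.\ $\ell^{7m}\lesssim\log x/\log\log x$. It has no Siegel-zero term to control.

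\textbf{The choice of $m_\ell$.} You cannot have $\ell^{m_\ell}\approx M$ for every $\ell\le y$. With $m_\ell$ minimal subject to $\ell^{m_\ell}\ge M$, you only get $\ell^{m_\ell}<\ell M$ for $\ell<M$, and $\ell^{m_\ell}=\ell$ for $\ell\ge M$. So the Chebotarev input forces $M^2\le z$ and $y\le z$. That constraint is where the $1/2$ in $1/14$ actually comes from. With your claimed range $\ell^{7m}\le(\log x)^{1/2}$, the method would only give exponent $1/28$.

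\textbf{The corrected computation.} Take $M=\sqrt z$.
\begin{itemize}
\item Part (a) is $\ll\pi(x)\pi(y)/\sqrt z=o(\pi(x))$ whenever $y=o(\sqrt z\log z)$. For example, $y=(\log x)^{1/14}(\log\log x)^{13/14-\epsilon}$ works.
\item In (b), every $\ell^{m_\ell}\le z$, so $\log|n_p|\le\pi(y)\log z\ll y=o(\log p)$. Hence the \emph{qualitative} joint Sato--Tate theorem suffices: the strip $|s+t|\le\delta$ has $\nu_{ST}$-measure $O(\delta)$ with $\delta$ arbitrary. No effective joint Sato--Tate is needed.
\end{itemize}
Replace your Step~2 with this computation. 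The remaining inputs are harmless: the uniform bound $\delta(\ell^m)\ll\ell^{-m}$ (Proposition~\ref{asymptotic_delta}), and the $O(\omega(\ell N))$ primes excluded in the definition of $\pi_{f,g}$, which contribute only $O(y)$ in total.
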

Utilizing Theorem \ref{main} and Brun’s sieve, we extend this result to the convolution function  $a_f\ast a_g$
, showing that the phenomenon persists for almost all positive integers in terms of natural density. Specifically, the set of integers $n$
 where either  $(a_f\ast a_g)(n)=0$
 or its largest prime factor satisfies a similar lower bound has density 1, as stated in the following theorem.
\begin{theorem}\label{over n}
   Let $f$ and $g$ as in Theorem \ref{main}. Then for any $\epsilon>0$, the set
   $$\{n:(a_f\ast a_g)(n)=0~ {\rm{or}}~P((a_f\ast a_g)(n))>(\log n)^{1/14}(\log \log n )^{3/7-\epsilon} \}$$
   has natural density 1.
\end{theorem}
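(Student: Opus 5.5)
We deduce Theorem \ref{over n} from Theorem \ref{main} together with a multiplicativity argument and Brun's sieve, in the spirit of the analogous deduction of Bilu, Gun and Naik for a single form. The function $h=a_f\ast a_g$ is multiplicative, being a Dirichlet convolution of multiplicative functions. Hence for every $n$ and every prime $p$ with $p\,\|\,n$ we have $h(n)=h(p)\,h(n/p)$. Moreover
\[
h(p)=a_f(p)+a_g(p),
\]
since $a_f(1)=a_g(1)=1$. So if $h(n)\neq 0$, then $h(p)\mid h(n)$ and consequently $P(h(n))\ge P(a_f(p)+a_g(p))$ whenever $h(p)\ne 0$. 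It therefore suffices to show that almost every $n$ has at least one prime $p\,\|\,n$ that is \emph{good}, meaning $p$ is large and satisfies
\[
P(a_f(p)+a_g(p))>(\log n)^{1/14}(\log\log n)^{3/7-\epsilon}.
\]
If $h(n)=0$, the integer $n$ lies in the set anyway.

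The plan has four steps.

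\textbf{Step 1 (prime set).} Let $\mathcal B$ be the exceptional set of primes in Theorem \ref{main}, taken for a slightly smaller $\epsilon'<\epsilon$. Its relative density among primes is zero, and the good primes are those outside $\mathcal B$.

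\textbf{Step 2 (choose the range).} Fix $x$ and consider $n\le x$. Take primes $p$ in a range $[y,x]$ with $y=\exp((\log x)^{1-\delta})$, or even $y=(\log x)^{A}$. For such $p\notin\mathcal B$ we have $\log p\ge(\log x)^{1-\delta}$. This yields
\[
P(h(p))>(\log p)^{1/14}(\log\log p)^{3/7-\epsilon'}\ge(\log n)^{1/14}(\log\log n)^{3/7-\epsilon},
\]
after choosing $\delta$ small depending on $\epsilon-\epsilon'$. A cleaner choice would let $y=x^{1/\log\log x}$, which gives $\log p\gg \log x/\log\log x$, and the loss $(\log\log x)^{-1/14}$ is absorbed by $\epsilon-\epsilon'$. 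However, to have many primes we need $\sum_{y\le p\le x,\,p\notin\mathcal B}1/p\to\infty$, so we take $y=\exp((\log x)^{1-\delta})$. Then $\sum 1/p\approx\delta\log\log x\to\infty$, and the sum over $\mathcal B$ is $o(\log\log x)$ by partial summation from the zero relative density. Since $\delta$ can be taken arbitrarily small, we obtain the exponent $1/14$ with the $\epsilon$ loss.

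\textbf{Step 3 (sieve).} Bound the number of $n\le x$ that have no prime $p\in\mathcal P=[y,x]\setminus\mathcal B$ exactly dividing them. First, the integers divisible by $p^2$ for some $p\ge y$ number at most $x\sum_{p\ge y}p^{-2}=o(x)$. Second, the integers coprime to all $p\in\mathcal P$ are counted by Brun's sieve, or simply by a Selberg/Brun upper bound, as
\[
\ll x\prod_{p\in\mathcal P,\,p\le x^{1/\log\log x}}(1-1/p)\ll x\exp\Bigl(-\sum_{p\in\mathcal P}1/p\Bigr)=o(x).
\]
In this product we truncate $\mathcal P$ at a level $z$ where the sieve applies; the reciprocal sum up to $z$ still diverges.

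\textbf{Step 4 (conclusion).} For the remaining $n$ there is a $p\in\mathcal P$ with $p\,\|\,n$. Then either $h(n)=0$, or $h(p)\ne0$ and $P(h(n))\ge P(h(p))$, which exceeds the required bound by Step 2. Note that $h(p)=0$ is excluded for $p\notin\mathcal B$, because Theorem \ref{main} forces $P(h(p))$ to be defined and large.

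The main obstacle is Step 3 combined with Step 2. We must ensure that, after removing the density-zero set $\mathcal B$, the reciprocal sum over the chosen range still diverges. This requires converting the "almost all primes" statement into control of $\sum_{p\in\mathcal B,\,p\le x}1/p=o(\log\log x)$ via partial summation, and balancing $\delta$ against $\epsilon$.
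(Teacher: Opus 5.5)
Your architecture matches the paper's: multiplicativity of $a_f\ast a_g$, a good prime $p\,\|\,n$ giving $P((a_f\ast a_g)(n))\ge P(a_f(p)+a_g(p))$, discarding $n$ with $p^2\mid n$, and an upper-bound sieve for the $n$ coprime to all good primes. \textbf{However, the parameter choice in Step 2 is wrong, so the argument as written proves a weaker theorem.}

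With $y=\exp((\log x)^{1-\delta})$ and $\delta>0$ fixed, a good prime $p\ge y$ only gives $(\log p)^{1/14}\ge(\log x)^{(1-\delta)/14}$. For the typical $n\in[\sqrt{x},x]$ you need $\gg(\log x)^{1/14}(\log\log x)^{3/7-\epsilon}$. The deficit is a factor $(\log x)^{\delta/14}$, while your only slack is $(\log\log x)^{\epsilon-\epsilon'}$. A power of $\log\log x$ cannot absorb a power of $\log x$, however small $\delta$ is. So you only obtain $P((a_f\ast a_g)(n))>(\log n)^{(1-\delta)/14}$ for almost all $n$; the alternative $y=(\log x)^A$ is far worse. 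Your ``cleaner choice'' $y=x^{1/\log\log x}$ also fails for small $\epsilon$: its loss $(\log\log x)^{-1/14}$ is absorbed only if $\epsilon-\epsilon'>1/14$.

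The fix, which the paper uses with $x_0=x^{1/(\log\log x)^{\epsilon}}$, is $y=x^{1/(\log\log x)^{\eta}}$ with $0<\eta<14(\epsilon-\epsilon')$. Then $\log p\ge\log x/(\log\log x)^{\eta}$, and the loss is only $(\log\log x)^{-\eta/14}$, which your slack does absorb. Your reason for rejecting such $y$ is false:
$\sum_{y<p\le x^{1/5}}1/p=\log\frac{\log x}{5\log y}+o(1)=\eta\log\log\log x+O(1)\to\infty$.

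This divergence is slow but enough. Write $|\mathcal{B}(t)|\le\theta(t)\pi(t)$ with $\theta$ decreasing to $0$. Partial summation shows the primes of $\mathcal{B}$ in $(y,x^{1/5}]$ contribute at most $\theta(y)(\eta\log\log\log x+O(1))$ to this sum. Hence the sieve bound becomes $\ll x(\log\log x)^{-\eta(1-o(1))}=o(x)$.

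Step 3 must then sieve by the good primes in $(y,x^{1/5}]$ rather than those up to $x^{1/\log\log x}$, which now lies below $y$. With these changes Steps 3 and 4 go through. Your $\epsilon'<\epsilon$ bookkeeping correctly absorbs the $(\log\log n)^{-\eta/14}$ loss, a point the paper's write-up passes over by using the same $\epsilon$ throughout.
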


\begin{remark}
Using a result of Erd\Horig{o}s \cite{erdos} on $\mathcal{B}$-free numbers, combined with Lemma \ref{same_coefficients}, one can establish that the set $\{n: (a_f\ast a_g)(n) \neq 0\}$
has positive density. Consequently, this implies that the set 
$$\{n: P((a_f\ast a_g)(n))>(\log n)^{1/14}(\log \log n )^{3/7-\epsilon}\}$$
also has positive density.
\end{remark}

Another interesting consequence of Theorem \ref{main} is the following result, which gives a variation of the multiplicity one theorem demonstrating that if the sum $a_f(p)+a_g(p)$
is small for a positive-density subset of primes, then $f$ and $g$
must be twist-equivalent by a quadratic character. 
\begin{corollary}\label{character}
Let $f$ and $g$ be two non-CM normalized newforms of trivial nebentypus with integer Fourier coefficients. If the set
$$\{p:|a_f(p)+a_g(p)|\leq (\log p)^{1/14}(\log \log p )^{3/7-\epsilon}\}$$
has a positive upper density for some $\epsilon>0$, then 
$f$ and $g$ are twist-equivalent by a quadratic character.
\end{corollary}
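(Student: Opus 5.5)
We argue by contraposition from Theorem \ref{main}. Suppose $f$ and $g$ are not twist-equivalent by a quadratic character. We split into two cases: either $f$ and $g$ are twist-inequivalent in the sense of Theorem \ref{main}, or $f=\chi\otimes g$ for some primitive Dirichlet character $\chi$ which is not quadratic. In each case we show that the set
$$\mathcal{S}=\{p:|a_f(p)+a_g(p)|\leq (\log p)^{1/14}(\log \log p )^{3/7-\epsilon}\}$$
has upper density zero among primes, which contradicts the hypothesis.

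\emph{Case 1: $f,g$ twist-inequivalent.} Every $p\in\mathcal{S}$ has either $a_f(p)+a_g(p)=0$ or $P(a_f(p)+a_g(p))\le |a_f(p)+a_g(p)|\le (\log p)^{1/14}(\log\log p)^{3/7-\epsilon}$. Theorem \ref{main} asserts that the set of primes with $P(a_f(p)+a_g(p))> (\log p)^{1/14}(\log \log p )^{3/7-\epsilon}$ has density one. This set implicitly excludes the primes where the sum vanishes, since $P(0)$ is undefined. Its complement therefore has density zero and contains $\mathcal{S}$, so $\mathcal{S}$ has upper density zero.

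\emph{Case 2: $f=\chi\otimes g$ with $\chi$ primitive and not quadratic.} Since $f$ and $g$ both have trivial nebentypus, the twist forces $\chi^2$ to be trivial on the relevant Hecke eigenvalues. The cleanest route uses the integrality of the coefficients instead. For $p\nmid N_1N_2\,\mathrm{cond}(\chi)$ we have $a_f(p)=\chi(p)a_g(p)$ with both sides integers. Whenever $a_g(p)\neq 0$, this forces $\chi(p)=a_f(p)/a_g(p)\in\mathbb{Q}$, hence $\chi(p)=\pm1$. By Serre's theorem for non-CM forms, $a_g(p)\neq0$ for a density-one set of primes. Since $\chi$ is not quadratic, Dirichlet's theorem gives a positive proportion of primes $p$ with $\chi(p)\notin\{\pm1\}$. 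This is a contradiction, so Case 2 cannot occur.

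The step needing the most care is ensuring that Case 2 is fully excluded. Here the density-one nonvanishing of $a_g(p)$ (Serre) and the equidistribution of $\chi(p)$ over residue classes are the inputs to invoke. Note also that if $\chi$ is quadratic and $f=\chi\otimes g$, the conclusion is exactly what the corollary claims. This includes the trivial-character case $f=g$, which is consistent with the hypothesis since then $a_f(p)+a_g(p)=2a_f(p)$.
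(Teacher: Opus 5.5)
Your argument is correct. Case 1 coincides with the paper's proof, which likewise just invokes Theorem \ref{main}. The primes with $a_f(p)+a_g(p)=0$ have density zero by Lemma \ref{same_coefficients}, and this is what justifies your ``implicit exclusion''.

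Where you differ is in showing that the twisting character is quadratic. The paper does this in one line via the nebentypus: the nebentypus of $\chi\otimes g$ is $\chi^2\varepsilon_g$, so trivial nebentypus for both $f$ and $g$ forces $\chi^2=1$. You instead use integrality. For $p\nmid N_1N_2\,\mathrm{cond}(\chi)$ with $a_g(p)\neq 0$, the value $\chi(p)=a_f(p)/a_g(p)$ is a rational root of unity, hence $\pm1$. Non-vanishing of $a_g(p)$ on a density-one set, combined with Dirichlet's theorem (a residue class $a$ with $\chi(a)^2\neq 1$ carries a positive proportion of primes), then rules out $\chi^2\neq 1$.

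The paper's route is purely formal. Yours trades the nebentypus computation for the integrality hypothesis, at the cost of an extra nontrivial input: Serre's non-vanishing theorem, or equally the Sato--Tate theorem already used in the paper.

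Your opening sentence in Case 2, about $\chi^2$ being ``trivial on the relevant Hecke eigenvalues'', is vague as written. Stated precisely as $\varepsilon_f=\chi^2\varepsilon_g$, it would have closed the case immediately.
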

This result should be compared with Ramakrishnan’s theorem in the appendix of \cite{duke}. Ramakrishnan assumes that if $a_f^2(p) = a_g^2(p)$ for a subset of primes with density $\epsilon>0$, no matter how small $\epsilon$ is, then the relation $$a_f(p) = \chi(p)a_g(p)$$
holds for all $p\nmid N_1N_2$, where $\chi$ is a quadratic character with conductor dividing $N_1N_2$.
Our result in Corollary \ref{character} states that if  $f$ and $g$ are two non-CM normalized newforms satisfying $\{p:|a_f(p)+a_g(p)|\leq (\log p)^{1/14}(\log \log p )^{3/7-\epsilon}\}$ with a positive upper density for some $\epsilon >0$, then the same conclusion holds: that is, $a_f(p) = \chi(p)a_g(p)$ for all $p\nmid N_1N_2$. Thus, while our starting assumption differs from Ramakrishnan’s, we arrive at the same conclusion. However, our approach requires the additional assumption that 
$f$ and $g$ have integer Fourier coefficients.

Finally, assuming GRH, we obtain the following refined lower bounds on the growth of  $|a_f(p)+a_g(p)|$
 and its largest prime factor. 
\begin{theorem}\label{under GRH}
 Let $f\in S_k(N_1)$ and  $g\in S_k(N_2)$ be two non-CM normalized newforms with trivial nebentypus that are twist-inequivalent and having integer Fourier coefficients $a_f(n)$ and $a_g(n)$, respectively. Then under GRH, for almost all primes $p$ and any $\epsilon>0$, we have
       \begin{itemize}
           \item[(a)] 
           $\log P(a_f(p)+a_g(p))\geq \frac{\log p}{e^{3(\log \log p)^{1/2+\epsilon}}}\ge (\log p)^{1-\epsilon}.$
           
\item[(b)]
$\log |a_f(p)+a_g(p)| \ge (\log \log p)^{1/2+\epsilon} \frac{ \log p}{e^{3(\log \log p)^{1/2+\epsilon}}}.$
\end{itemize}  
\end{theorem}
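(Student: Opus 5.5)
We follow the strategy of Murty--Murty for $P(a_f(p))$ under GRH, with the single Galois representation replaced by the product representation attached to the pair $(f,g)$.

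\textbf{Setup.} For a prime $\ell$, let
\[
\bar\rho_\ell=\bar\rho_{f,\ell}\times\bar\rho_{g,\ell}:G_{\mathbb Q}\to \mathrm{GL}_2(\mathbb F_\ell)\times\mathrm{GL}_2(\mathbb F_\ell)
\]
be the product of the mod $\ell$ representations of $f$ and $g$. Since $f,g$ are non-CM and twist-inequivalent, the Ribet--Loeffler open image theorem for pairs applies. For all $\ell$ outside a finite set, it shows that the image $G_\ell$ of $\bar\rho_\ell$ is
\[
\{(A,B): \det A=\det B\in (\mathbb F_\ell^\times)^{k-1}\},
\]
so $|G_\ell|\asymp \ell^{7}$. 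For $d$ squarefree, a product of such $\ell$, the image of $\prod_{\ell\mid d}\bar\rho_\ell$ is the product of the $G_\ell$ by Goursat's lemma, since these groups have no common simple quotients for large $\ell$. Define
\[
C_\ell=\{(A,B)\in G_\ell:\mathrm{tr}A+\mathrm{tr}B=0\}.
\]
This is conjugation stable, and counting gives $|C_\ell|/|G_\ell|=1/\ell+O(\ell^{-2})$. For $p\nmid \ell N_1N_2$ we have $\ell\mid a_f(p)+a_g(p)$ exactly when $\mathrm{Frob}_p$ lands in $C_\ell$.

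\textbf{Chebotarev and the counting argument.} Let $x$ be large, and for $d\le y$ squarefree let $\pi_d(x)$ count the primes $p\le x$ with $d\mid a_f(p)+a_g(p)$. GRH-effective Chebotarev in the field cut out by $\prod_{\ell\mid d}\bar\rho_\ell$, whose degree is $\ll d^{7}$, gives
\[
\pi_d(x)=\delta(d)\,\pi(x)+O\big(d^{7}x^{1/2}\log(dNx)\big),\qquad \delta(d)=\prod_{\ell\mid d}\tfrac{|C_\ell|}{|G_\ell|}\approx \frac1d.
\]
Suppose $P(a_f(p)+a_g(p))\le y$. The sum is nonzero for almost all $p$: a Sato--Tate/Chebotarev density-zero argument applies because $\mathrm{tr}A+\mathrm{tr}B=0$ is a proper subvariety. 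By Deligne's bound the sum is then a $y$-smooth integer of absolute value at most $4x^{(k-1)/2}$.

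We follow Murty--Murty. Write $n=a_f(p)+a_g(p)$. Since $n$ is $y$-smooth and $|n|\le 4x^{(k-1)/2}$, it has many divisors composed of primes at most $y$. One option is to bound the number of such $p$ via the second-moment/Turán-type inequality
\[
\sum_{p\le x}\big(\omega_y(n)-\textstyle\sum_{\ell\le y}\delta(\ell)\big)^2\ll \pi(x)\log\log y+\text{error},
\]
where $\omega_y$ counts prime factors up to $y$. That alone is too weak. Instead we use the cleaner route: such an $n$ must have a divisor $d$ in a suitable range $[z,z y]$, with $d$ squarefree part large. Otherwise $|n|$ is dominated by prime powers $\ell^a$ with $\ell^a>z$, and the set of primes $p$ for which $\ell^2$ divides $n$ with $\ell$ large has small density by Chebotarev applied to the mod $\ell^2$ representation. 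Summing $\pi_d(x)$ over $d\in[z,zy]$ gives
\[
\#\{p\le x\}\ll \pi(x)\sum_{z\le d\le zy}\frac1d\cdot(\text{multiplicity corrections})+ (zy)^{8}x^{1/2}\log x .
\]
A more efficient version uses Rankin's trick. Weight by $|n|^{-\sigma}$-type moments, or equivalently count smooth values through $\Psi$-estimates: the $y$-smooth numbers up to $X=x^{(k-1)/2}$ are sparse when $\log y\le \log X/e^{(\log\log X)^{1/2+\epsilon}}$. We match this against the equidistribution given by Chebotarev at moduli $d\le x^{1/16}$, which is permitted because the error term $d^{7}x^{1/2}$ must be $o(\pi(x))$. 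This balancing yields
\[
\log y\approx \log p / e^{3(\log\log p)^{1/2+\epsilon}},
\]
and hence part (a).

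\textbf{Part (b).} For almost all $p$, the integer $a_f(p)+a_g(p)$ has at least $(\log\log p)^{1/2+\epsilon}$ distinct prime factors exceeding $y$. This follows from the same Chebotarev moment computation applied to $\omega$ restricted to primes in $(y, x^{1/16}]$: there $\sum 1/\ell\approx \log(\log x/\log y)\approx 3(\log\log p)^{1/2+\epsilon}$, and Chebyshev's inequality concentrates $\omega$ around this mean. Multiplying these prime factors together gives
\[
\log|a_f(p)+a_g(p)|\ge (\log\log p)^{1/2+\epsilon}\log y,
\]
which is (b).

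\textbf{Main obstacle.} The hardest step is the image computation. One must show that the joint mod-$d$ image is the full fibered product for all large $\ell$, uniformly, and that $|C_\ell|/|G_\ell|\sim 1/\ell$. These are needed both for the density $\delta(d)\approx 1/d$ and for the degree bound $d^{7}$ that controls the allowable size of $y$ in the effective Chebotarev theorem.
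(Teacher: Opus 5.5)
\textbf{Gap in part (a).} The argument you give for (a) does not work as written. The union bound over divisors $d\in[z,zy]$ gives at most $\pi(x)\sum_{z\le d\le zy}1/d\asymp \pi(x)\log y$. That is larger than $\pi(x)$, so it is vacuous. The $\Psi$/Rankin route has no mechanism behind it. Sparsity of $y$-smooth integers up to $x^{(k-1)/2}$ says nothing about how often the specific values $a_f(p)+a_g(p)$ are smooth. Chebotarev at moduli $\le x^{\eta}$ only controls divisibility by small moduli, not the distribution of the values at the scale smooth-number counts require. The claimed ``balancing'' producing exactly $\log y\approx \log p/e^{3(\log\log p)^{1/2+\epsilon}}$ is asserted, not derived.

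That shape is the fingerprint of a Chebyshev deviation of size $(\log\log p)^{1/2+\epsilon}$, i.e.\ of the Tur\'an second-moment method you set aside as ``too weak''. The paper uses exactly that method. With $n=a_f(p)+a_g(p)$ and $z=p^{1/F(p)}$, for almost all $p$ it has $\omega_z(n)\le \log\log z+(\log\log p)^{1/2+\epsilon}$ and $\omega(n)>\log\log p-(\log\log p)^{1/2+\epsilon}$. The second bound comes from $\omega_{x^{\eta}}$ together with the fact that $|n|\le 4p^{(k-1)/2}$ has only $O(1)$ prime factors above $x^{\eta}$. Subtracting leaves at least $\log F(p)-2(\log\log p)^{1/2+\epsilon}$ prime factors above $z$. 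The choice $\log F=3(\log\log p)^{1/2+\epsilon}$ makes this equal to $(\log\log p)^{1/2+\epsilon}$, which gives (a) and (b) together.

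\textbf{A valid repair from your part (b).} Your part (b) paragraph already contains a correct fix, by a different and more efficient route. Take the second moment of $\omega_{(y,u]}(n)$, the number of prime factors of $n$ in the window $(y,u]$. Its mean is $\mu=\log(\log u/\log y)+O(1)$. Its variance is $O(\mu)$; the cross terms use $\pi_{f,g}(x,\ell_1\ell_2)$ and $\delta(\ell_1\ell_2)=\delta(\ell_1)\delta(\ell_2)$, which holds because every window prime is large. Chebyshev's inequality then gives $\omega_{(y,u]}(n)\ge \mu-\mu^{1/2+\epsilon}$ for almost all $p$. A single prime factor in the window already yields (a), and in fact for any $F\to\infty$. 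That is stronger than the paper's (a), because you never need the normal order of the full $\omega$ and your fluctuation is only $\sqrt{\mu}$ rather than $(\log\log p)^{1/2}$. With $\log F=3(\log\log x)^{1/2+\epsilon}$ the same bound gives (b).

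\textbf{What you need to fix.} Restructure the proof around this window argument, and correct the range: with $u=x^{1/16}$ the error terms do not sum.
\begin{itemize}
\item Over pairs $\ell_1,\ell_2\le u$, the paper's $h^6x^{1/2}\log(hNx)$ error requires $u^{14}x^{1/2}\log x=o(x/\log x)$, so $u=x^{\eta}$ with $\eta<1/28$.
\item With your $d^7$ bound, even the first moment $\sum_{\ell\le x^{1/16}}\ell^7x^{1/2}\log x$ is of size $x$.
\item Shrinking $u$ changes $\mu$ only by $O(1)$, so nothing else is affected.
\item Work with $p\in(x/2,x]$ to pass from $x$ to $p$ in the bounds, and discard the $O(x^{13/14})$ primes with $n=0$.
\end{itemize}
Finally, your ``main obstacle'' is misplaced. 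The image computation is quoted input (Loeffler, and the counts of $|\mathscr{A}_h|$, $|\mathscr{C}_\ell|$). The real work is this pair-correlation estimate in the second moment.
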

At the end, we make some important comments about the paper.
\begin{enumerate}
\item 
Following the line of proof of \cite[Theorem 3]{gun}, one can derive an analogous result for the sum $a_f(p)+a_g(p)$, which yields a substantial improvement over part $(a)$ of \Cref{under GRH}. However, we retain the formulation of \Cref{under GRH}, as we aim to study the normal order of the function $a_f(p)+a_g(p)$ (see Sections  \ref{section: normal order},  \ref{sec: proof uner grh} for further details).

\item 
We remark that in Theorem \ref{main} and all subsequent results about primes in this paper remain valid if we replace $a_f(p)+a_g(p)$ with $a_f(p)-a_g(p).$
\end{enumerate}

\subsection*{Notation and conventions} 
Throughout the article, the newforms considered are of trivial nebentypus. 
By GRH, we mean the generalized Riemann hypothesis for all Artin $L$-functions. The letters $\ell$, $p$, and $q$ always denote positive rational primes unless stated otherwise. 
For arithmetic functions $\phi_1, \phi _2:\N\rightarrow \C$ we define the convolution 
$(\phi_1 \ast \phi_2)(n):=\sum_{km=n}\phi_1(k)\phi_2(m).$
The notations $O$ (big-O) and $o$ (small-o) are the usual Landau notations.
For $x\geq2, \pi(x)$ denotes the number of primes up to $x$.
For a set $A$,  $A(x)$ denotes the number of elements in $A$ up to  $x$. Saying that a property holds for almost all primes, we mean that the set of primes satisfying the given property has the natural density 1.

\section{Preliminaries} 
This section includes essential prerequisites such as the joint Sato-Tate theorem and product Galois representations linked to two non-CM cuspidal normalized newforms.

\subsection{Joint Sato-Tate for Fourier coefficients} 
Let  $f\in {S}_k(N)$ be a non-CM holomorphic cuspidal normalized newform with Fourier coefficients $a_f(n)$, then from \ref{deligne bound}, 
$$|a_f(p)|\le 2p^{(k-1)/2}.$$
The Sato-Tate conjecture, now proved (see \cite[Thm. B]{lamb}), states that the sequence $\{\frac{a_f(p)}{p^{k-1/2}}: p ~{{\rm prime}}\}$ is equidistributed in the interval $[-2,2]$ with respect to the Sato-Tate measure, $d\mu_{ST}(t) = \sqrt{1-\frac{t^2}{4}}\,dt.$
More precisely, for any interval $I\subset [-2,2]$, 
\[
       \lim_{x\rightarrow \infty} \frac{\left\vert \left\{p\le x :  p\nmid N, ~\frac{a_f(p)}{p^{(k-1)/2}}\in I\right\}\right\vert} {\pi(x)} = \mu_{ST} (I). 
\] 
 This was further extended to a pair of cusp forms which we call the joint Sato-Tate theorem 
 (see \cite{wong}). More precisely, if $f\in {S}_k(N_1)$ and $g\in {S}_k(N_2)$ are two non-CM normalized cuspidal newforms which are twist-inequivalent and $B\subset [-2,2]\times [-2,2]$ is a Borel set, then 
\begin{equation}\label{joint sato tate}
        \lim_{x\rightarrow \infty} \frac{\left\vert \left\{p\le x : p\nmid N_1N_2, \,\Big(\frac{a_f(p)}{p^{(k-1)/2}}, \frac{a_g(p)}{p^{(k-1)/2}}\Big)\in B\right\}\right\vert}{\pi(x)} = \nu_{ST} (B),
\end{equation}
where $\nu_{ST}$ is the measure on $[-2,2]\times [-2,2]$ given by $d\nu_{ST}(s,t) = \sqrt{(1-\frac{s^2}{4})(1-\frac{t^2}{4})}\,ds dt$.

\subsection{Chebotarev density theorem}\label{S_basic}
Let $K$
 be a finite Galois extension of $\Q$ with Galois group $G$. For a prime $p$ unramified in $K$, let ${\rm Frob}_p$ denote a Frobenius element at $p$ in $G$.
 Given a conjugation-invariant subset $C \subset G$, define $$
\pi_C(x):=\big|\{p\le x: p~ {\rm is ~ unramified ~in}~ K ~{\rm and ~Frob}_p\in C\}\big|.
$$
The Chebotarev Density Theorem asserts that
\begin{equation}\label{cdt main statement}
\lim_{x \rightarrow \infty}\frac{\pi_C(x)}{{\rm Li}(x)}= \frac{|C|}{|G|}, ~~~{\rm where ~ Li}(x)=\int_{2}^{x}\frac{dt}{\log t}\sim \frac{x}{\log x}~~{\rm as}~~ x \rightarrow\infty.\end{equation}
Denoting the degree and absolute discriminant of $K$ by $n_K$ and $d_K$, respectively, Lagarias and Odlyzko established effective versions of this theorem. 
Unconditionally, the version given in \cite[Eq. 1.7]{lo} ensures the existence of an absolute, effective, and computable constant $c'>0$ such that  for $x \geq \exp(10 n_K (\log d_K)^2), $
	\begin{equation}\label{cdt}
		\left|\pi_C(x)- \frac{|C|}{|G|}{\rm Li}(x) \right| \leq  \frac{|C|}{|G|}{\rm Li}(x^\beta)+O\left( \Vert C\Vert x ~{\rm exp}\left(-c'\sqrt{\frac{\log x}{n_K}}\right) \right),
	\end{equation}
    where  $\Vert C\Vert$ denotes the number of conjugacy classes in $C$ and  $\beta$ is the possible Landau-Siegel zero of $\zeta_K(s)$ within the strip 
	$$
	1-\frac{1}{4\log d_K}\le \Re (s)<1.
	$$
If such a zero does not exist, the corresponding term in \eqref{cdt} is omitted. The best-known bound for $\beta$ due to Stark \cite[Eq. (27)]{sta} is	\begin{equation}\label{beta_stark}
		\beta \le 1-\frac{c_0}{d_K^{1/n_K}},
	\end{equation}
	where $c_0>0$ is an effective constant.
    Assuming GRH for the Dedekind zeta function $\zeta_K(s)$, in \cite[Thm. 1.1]{lo}, Lagarias and Odlyzko proved that for $x\geq 2$,
\begin{equation}\label{CDT under GRH}
			\pi_C(x)= \frac{|C|}{|G|}\pi(x)+O\bigg(\frac{|C|}{|G|}x^{{1}/{2}}(\log d_K+n_K\log x)\bigg).
		\end{equation}
    For applications, it is often useful to obtain upper and lower bounds for $\pi_C(x)$ of order Li$(x)$
 in a suitable range of $x$.
 Lagarias and Odlyzko \cite{LMO} made significant progress in this direction, and various mathematicians have further improved their results. In particular, Thorner and Zaman \cite[P. 2]{tz} established the existence of an absolute effective constant $A$ such that
    \begin{equation}\label{thorner}
    \pi_C(x)\ll \frac{|C|}{|G|}{\rm Li}(x)\quad  {\rm for}\,\, x\geq (d_Kn_K^{n_K})^A.
 \end{equation}

\subsection{mod-$h$ Galois representations}\label{seclh} 
	Let $G_{\Q}= {\rm Gal}(\bar\Q/\Q)$ be the absolute Galois group of an algebraic closure $\bar\Q$ of $\Q$. Let $k\ge 2, N\ge 1$ be positive integers, and $ \ell $ be a prime. Suppose $f\in S_k(N)$  is a normalized newform
	with Fourier coefficients $a_f(n)$. {The works of Eichler, Shimura, and Deligne (see \cite{del})} give the existence of a 
	two-dimensional  continuous, odd, and irreducible Galois representation
	\begin{equation*}
		\rho_{f, \ell }:G_{\Q}\rightarrow {\rm GL}_2(\Z_\ell )
	\end{equation*}
	which is unramified at $p\nmid N\ell$.  If ${\rm{Frob}}_p$ denotes a Frobenius element corresponding to a prime $p\nmid N\ell$, then the representation $\rho_{f, \ell }$ has the property that 
	$$ {\rm{tr}}(\rho_ {f,\ell}({\rm{Frob}}_p))=a_f(p), \quad {\rm {det}}(\rho_ {f,\ell}({\rm{Frob}}_p))=p^{k-1}.$$
	By reduction and semi-simplification, we obtain a mod-$\ell$ Galois representation, namely 
	$$ {\bar{\rho}}_ {f,\ell}:G_{\Q}\rightarrow {\rm GL}_2(\F_\ell), $$
	where $\F_{\ell}:=\Z/\ell\Z$.
	
	Let $h=\prod_{j=1}^{t}\ell_j^{n_j}$ be a positive integer.
	Using the $\ell_j$-adic representations attached to $f$, 
	we consider an $h$-adic representation given by  products of mod-$\ell_j$ representations
	\begin{equation*}
		{\rho}_{f,h}:  G_\Q \rightarrow {\rm GL}_2\big(\prod_{1\le j\le t}\Z_{\ell_j}\big).
	\end{equation*}
	For each $1\le j\le t$, we have the natural projection $\Z_{\ell_j}\twoheadrightarrow {\Z/{\ell_j^{n_j}}\Z}$, 
	and hence 
	we obtain a mod-$h$ Galois representation given by 
		$$ {\bar{\rho}}_{f,h} :G_\Q \rightarrow
	{\rm GL}_2\big(\prod_{1\le j\le t}{{\Z/{\ell_j^{n_j}}\Z}}\big) \xrightarrow{\cong} {\rm GL}_2(\Z/h\Z).$$
	Furthermore, if $p\nmid Nh$ is a prime, then $\bar{\rho}_{f,h}$ 
	is unramified at $p$ and
	$${ {\rm tr}\left(\bar{\rho}_{f,h}\lb{\rm Frob}_p\rb\right)\equiv a (p) \pmod h,~~~~ \quad
	{\rm det}\left(\bar{\rho}_{f,h}\lb{\rm Frob}_p\rb\right)\equiv p^{k-1}\pmod h.} $$
	Let $f \in S_{k}(N_1)$ and $g \in S_{k}(N_2)$ be  two twist-inequivalent non-CM normalized newforms having Fourier coefficients $a_f(n)$ and $a_g(n)$, respectively.
	Then one can consider the product 
	representation $\bar{\rho}_h$ of $\bar{\rho}_{f,h}$ and $\bar{\rho}_{g,h}$, defined by
	\begin{align*}
		\bar{\rho}_h: G_\Q & \rightarrow {\rm GL}_2(\Z/h\Z)\times {\rm GL}_2(\Z/h\Z),\\
		\sigma & \mapsto (\bar{\rho}_{f,h}(\sigma), \bar{\rho}_{g,h}(\sigma)) \notag.
	\end{align*}
	Let $\mathscr{A}_h$ denote the image of $G_\Q$ under $\bar{\rho}_h$.
	By the fundamental theorem of Galois theory,  the fixed field of ${\rm ker}(\bar\rho_h)$, say $L_h$, is a finite 
	Galois extension of $\Q$ and
	\begin{equation}\label{image}
		{\rm Gal}(L_h/ \Q) \cong \mathscr{A}_h.
	\end{equation}
	Let $\mathscr{C}_h$ be the subset of $\mathscr{A}_h$ defined by
	\begin{equation}\label{C_h}
		\mathscr{C}_h=\{(A,B)\in \mathscr{A}_h: {\rm tr}(A)+{\rm tr}(B)=0\}.
	\end{equation}

	We now define the following function on the set of positive integers which will play an important role throughout the paper. For an integer $h > 1$, let
	\begin{equation}\label{delta_definition}
		\delta(h):=\frac{|\mathscr{C}_h|}{|\mathscr{A}_h|}
	\end{equation}
	and $\delta(1):=1$. 
	Since the trace of the 
	image of complex conjugation is always zero, 
	$\mathscr{C}_h\neq \phi$, and hence $\delta(h)>0$ for every integer $h$.

The size of the set $\mathscr{A}_h$ has been well studied in \cite[Lemma 3.1, 3.2]{kk}. We also note that if $h=\ell$ is a sufficiently large prime, then
\begin{equation}\label{eq: size of Cl}
   |\mathscr{C}_{\ell}| =  \frac{\ell^6}{{\rm gcd}(\ell-1,k-1)} + O\left(\ell^5\right),
\end{equation}
which can be proved exactly along the same lines as the proof of  \cite[Lemma 3.3]{kk}, where the corresponding set is $\{(A,B)\in \mathscr{A}_h: {\rm tr}(A)-{\rm tr}(B)=0 \}$.

We establish the following result concerning the multiplicative behaviour of $\delta$.
	\begin{proposition}\label{delta_multiplicative_large_prime}
		For  large primes $\ell_1, \ell_2$ with $\ell_1 \neq \ell_2$, we have
		$$
		\delta(\ell_1 \ell_2)=\delta(\ell_1) \delta(\ell_2).
		$$
	\end{proposition}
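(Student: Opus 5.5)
The plan is to show that the image $\mathscr{A}_{\ell_1\ell_2}$ splits as the direct product $\mathscr{A}_{\ell_1}\times\mathscr{A}_{\ell_2}$ under the Chinese remainder isomorphism $\mathrm{GL}_2(\Z/\ell_1\ell_2\Z)^2\cong \mathrm{GL}_2(\F_{\ell_1})^2\times\mathrm{GL}_2(\F_{\ell_2})^2$. Once this holds, the condition $\mathrm{tr}(A)+\mathrm{tr}(B)\equiv 0 \pmod{\ell_1\ell_2}$ is equivalent to the same condition holding separately modulo $\ell_1$ and modulo $\ell_2$. Hence $\mathscr{C}_{\ell_1\ell_2}$ corresponds exactly to $\mathscr{C}_{\ell_1}\times\mathscr{C}_{\ell_2}$, and $\delta(\ell_1\ell_2)=|\mathscr{C}_{\ell_1}||\mathscr{C}_{\ell_2}|/(|\mathscr{A}_{\ell_1}||\mathscr{A}_{\ell_2}|)=\delta(\ell_1)\delta(\ell_2)$. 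The counting part is therefore trivial, and the whole proof reduces to the product decomposition of the image.

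To get the decomposition, first recall the structure of $\mathscr{A}_\ell$ for large $\ell$, as in \cite[Lemma 3.1, 3.2]{kk}. Since $f,g$ are non-CM and twist-inequivalent, a result of Loeffler / Ribet type gives
$$\mathscr{A}_\ell=\{(A,B)\in \mathrm{GL}_2(\F_\ell)^2:\det A=\det B\in (\F_\ell^\times)^{k-1}\}$$
for all sufficiently large $\ell$. This is consistent with the count \eqref{eq: size of Cl}.

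Now $\mathscr{A}_{\ell_1\ell_2}$ is a subgroup of $\mathscr{A}_{\ell_1}\times\mathscr{A}_{\ell_2}$ surjecting onto each factor, so Goursat's lemma applies. It suffices to show that $\mathscr{A}_{\ell_1}$ and $\mathscr{A}_{\ell_2}$ have no common nontrivial quotient compatible with the fibre product. I would handle the commutator subgroup and the abelianization separately.

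\textbf{Commutator part.} The commutator subgroup of $\mathscr{A}_\ell$ is $\mathrm{SL}_2(\F_\ell)^2$. For $\ell\ge5$ its Jordan–Hölder factors are $\mathrm{PSL}_2(\F_\ell)$ and $\Z/2$. Since $\mathrm{PSL}_2(\F_{\ell_1})\not\cong\mathrm{PSL}_2(\F_{\ell_2})$ for $\ell_1\ne\ell_2$, the image contains $\mathrm{SL}_2(\F_{\ell_1})^2\times\mathrm{SL}_2(\F_{\ell_2})^2$. I expect the possible $\Z/2$ (central) issues to be dispatched by a Serre-style lemma: a closed subgroup of $\mathrm{SL}_2(\F_{\ell_1})\times\mathrm{SL}_2(\F_{\ell_2})$ surjecting onto both factors is everything when the $\mathrm{PSL}_2$ factors are non-isomorphic.

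\textbf{Abelian part.} Modulo the commutator subgroup, the image is determined by the common determinant character. The determinant map on $\mathscr{A}_{\ell_1\ell_2}$ is $\chi_{\ell_1\ell_2}^{k-1}$, a power of the mod-$\ell_1\ell_2$ cyclotomic character. Its image is $(\Z/\ell_1\ell_2\Z)^{\times(k-1)}$, which is exactly the product of the images modulo $\ell_1$ and modulo $\ell_2$, by CRT and the surjectivity of the cyclotomic character onto $(\Z/\ell_1\ell_2\Z)^\times$ (linear disjointness of $\Q(\zeta_{\ell_1})$ and $\Q(\zeta_{\ell_2})$).

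Combining the two parts, $|\mathscr{A}_{\ell_1\ell_2}|=|\mathscr{A}_{\ell_1}||\mathscr{A}_{\ell_2}|$, and the equality of groups follows.

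The main obstacle is the commutator step: the Goursat argument must rule out a nontrivial common abelian quotient (for example a $\Z/2$ coming from $\{\pm1\}$ or from the determinant) that is not detected by the determinant. My first attempt would be to show that any such common quotient factors through the abelianization of $\mathscr{A}_\ell$, which is the determinant image for $\ell\geq 5$ because $\mathrm{SL}_2(\F_\ell)$ is perfect. The quotient is then controlled by the cyclotomic disjointness argument above.
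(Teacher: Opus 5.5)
Your proof is correct, and its outer structure matches the paper's. Both show that the Chinese remainder map identifies $\mathscr{A}_{\ell_1\ell_2}$ with $\mathscr{A}_{\ell_1}\times\mathscr{A}_{\ell_2}$. After that the trace condition splits, so $\mathscr{C}_{\ell_1\ell_2}$ corresponds to $\mathscr{C}_{\ell_1}\times\mathscr{C}_{\ell_2}$.

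The difference is in the key step. The paper reads the description $\mathscr{A}_{\ell_1\ell_2}=\{(A,B):\det A=\det B\in((\Z/\ell_1\ell_2\Z)^{\times})^{k-1}\}$ directly off Loeffler's Theorem 3.2.2. You instead take only the single-prime images as input and prove the splitting via Goursat.

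Your two steps already suffice, so the closing ``main obstacle'' paragraph is not needed. Write $H=\mathscr{A}_{\ell_1\ell_2}$. The group $[H,H]$ surjects onto each $\mathrm{SL}_2(\F_{\ell_i})^2$. A common quotient of these perfect groups can only have $\Z/2$ as a composition factor, because $\mathrm{PSL}_2(\F_{\ell_1})\not\cong\mathrm{PSL}_2(\F_{\ell_2})$. It is therefore perfect and solvable, hence trivial; this is the ``Serre-style lemma'' you invoke. So $H\supseteq \mathrm{SL}_2(\F_{\ell_1})^2\times\mathrm{SL}_2(\F_{\ell_2})^2$. Since $H$ also maps onto $(\F_{\ell_1}^\times)^{k-1}\times(\F_{\ell_2}^\times)^{k-1}$ via $\chi_{\ell_1\ell_2}^{k-1}$, counting gives $|H|=|\mathscr{A}_{\ell_1}||\mathscr{A}_{\ell_2}|$.

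What your route buys is transparency. It shows that the only possible entanglement between $\ell_1$ and $\ell_2$ is abelian. It also shows that group theory alone cannot exclude this: for even $k$ both cyclic groups $(\F_{\ell_i}^\times)^{k-1}$ have even order $(\ell_i-1)/\gcd(\ell_i-1,k-1)$, so they genuinely share the quotient $\Z/2$. That entanglement is killed precisely by the linear disjointness of $\Q(\zeta_{\ell_1})$ and $\Q(\zeta_{\ell_2})$. Your argument also extends verbatim to any squarefree $h$ built from large primes. The paper's route is shorter but delegates this independence-across-primes step to the citation.
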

    \begin{proof}
    From the definition of $\delta$, it suffices to show that $$|\mathscr{A}_{\ell_1\ell_2}| = |\mathscr{A}_{\ell_1}||\mathscr{A}_{\ell_2}| \quad {\rm and} \quad |\mathscr{C}_{\ell_1\ell_2}| = |\mathscr{C}_{\ell_1}||\mathscr{C}_{\ell_2}|.$$
       We first establish the equality for $|\mathscr{A}_{\ell_1\ell_2}|$. Note that from \cite[Theorem 3.2.2]{loeff}, it follows that
    \begin{equation*}\label{eq: Ah}
        \mathscr{A}_{\ell_1\ell_2} = \{ (A,B): {\rm GL}_2(\Z/\ell_1\ell_2\Z)\times  {\rm GL}_2(\Z/\ell_1\ell_2\Z) : {\rm det}(A) = {\rm det}(B) \in ((\Z/\ell_1\ell_2\Z)^{\times})^{k-1} \}.
    \end{equation*}
    Consider the map \begin{align*}
    \psi:  \mathscr{A}_{\ell_1\ell_2}&\rightarrow \mathscr{A}_{\ell_1}\times \mathscr{A}_{\ell_2} \\
      (A,B) &\mapsto ((A_1,B_1),(A_2,B_2)),\end{align*}
    where $(A_1,B_1) = (A\pmod{\ell_1}, B\pmod{\ell_1})$ and $(A_2,B_2) = (A\pmod{\ell_2}, B\pmod{\ell_2})$. Then, using the Chinese Remainder Theorem, it is easy to see that the map $\psi $
    is bijective, and therefore 
    $|\mathscr{A}_{\ell_1\ell_2}| = |\mathscr{A}_{\ell_1}||\mathscr{A}_{\ell_2}|$.
    
    Further, restricting $\psi$ to the set $\mathscr{C}_{\ell_1\ell_2}$, we see that the image of $\psi$ lies inside  $\mathscr{C}_{\ell_1}\times \mathscr{C}_{\ell_2}$, and again the map is bijective. Hence,  $|\mathscr{C}_{\ell_1\ell_2}| = |\mathscr{C}_{\ell_1}||\mathscr{C}_{\ell_2}|$.
\end{proof}
   The final result in this section concerns the growth of the density function $\delta$.
 We summarize this in the following proposition.  
\begin{proposition}\label{asymptotic_delta}
		For any prime $\ell$ and $n \in\N$, we have
		\begin{equation*}
        \delta(\ell)=\frac{1}{\ell}+O\bigg(\frac{1}{\ell^2}\bigg)\quad {\rm and } \quad 
			\delta(\ell^n)=O\bigg( \frac{1}{\ell^n}\bigg).
		\end{equation*}
	\end{proposition}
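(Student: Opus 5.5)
For the first estimate I will take $\ell$ large and compute $\delta(\ell)=|\mathscr{C}_\ell|/|\mathscr{A}_\ell|$ explicitly. By the result of Loeffler \cite[Theorem 3.2.2]{loeff} (the one used in the proof of Proposition \ref{delta_multiplicative_large_prime}), for $\ell$ sufficiently large
$$\mathscr{A}_\ell=\{(A,B)\in {\rm GL}_2(\F_\ell)^2:\ \det A=\det B\in (\F_\ell^\times)^{k-1}\}.$$
\emph{Size of $\mathscr{A}_\ell$.} The subgroup $(\F_\ell^\times)^{k-1}$ has order $(\ell-1)/\gcd(\ell-1,k-1)$. For each $d$ in it, the fibre $\{A:\det A=d\}$ has $|{\rm SL}_2(\F_\ell)|=\ell(\ell^2-1)$ elements. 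Hence
$$|\mathscr{A}_\ell|=\frac{\ell-1}{\gcd(\ell-1,k-1)}\,\ell^2(\ell^2-1)^2=\frac{\ell^5}{\gcd(\ell-1,k-1)}\bigl(1+O(\ell^{-1})\bigr).$$
\emph{Size of $\mathscr{C}_\ell$.} This is given by \eqref{eq: size of Cl}, but the error term must be handled carefully. Dividing $\ell^6$ by $\ell^5$ gives a main term of order $\ell$, not $1/\ell$. So the normalisation in \eqref{eq: size of Cl} should read $\ell^4/\gcd(\ell-1,k-1)+O(\ell^3)$, and I would re-derive it directly. Fix $d$ in the allowed subgroup. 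I count pairs $(A,B)$ with $\det A=\det B=d$ and ${\rm tr}A=-{\rm tr}B$. For a given trace $t$, the number of matrices with trace $t$ and determinant $d$ is $\ell^2+\ell\chi(t^2-4d)$, where $\chi$ is the Legendre symbol (with $\chi(0)=0$). Therefore
$$\#\{(A,B)\}=\sum_{t\in\F_\ell}\bigl(\ell^2+\ell\chi(t^2-4d)\bigr)^2=\ell^5+O(\ell^4),$$
using $\bigl|\sum_t\chi(t^2-4d)\bigr|\le 1$. Summing over the $(\ell-1)/\gcd(\ell-1,k-1)$ values of $d$ gives
$$|\mathscr{C}_\ell|=\frac{\ell^6}{\gcd(\ell-1,k-1)}\bigl(1+O(\ell^{-1})\bigr),$$
consistent with \eqref{eq: size of Cl} read with $|\mathscr{A}_\ell|\asymp\ell^7/\gcd$.

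\emph{Correcting the count of $|\mathscr{A}_\ell|$.} The display above undercounts. The number of $(A,B)$ with both determinants equal to a fixed $d$ is $|{\rm SL}_2|^2=\ell^2(\ell^2-1)^2$, which is $\ell^6$ to leading order. Summing over $d$ gives
$$|\mathscr{A}_\ell|=\frac{\ell^7}{\gcd(\ell-1,k-1)}\bigl(1+O(\ell^{-1})\bigr).$$
The same per-$d$ bookkeeping gives $\ell^5$ for the count of $\mathscr{C}_\ell$ at each $d$, so the per-$d$ ratio is $\ell^5/\ell^6=1/\ell$ up to $O(\ell^{-2})$. The exact sums show that the $O(\ell^{-2})$ really is the order of the correction. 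Hence $\delta(\ell)=1/\ell+O(1/\ell^2)$ for large $\ell$. For the finitely many small $\ell$ the estimate is trivial, since $0<\delta(\ell)\le1$ and the implied constant can absorb them.

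\emph{Prime powers.} For $\delta(\ell^n)$ I will reduce modulo $\ell$ to pass from $\mathscr{A}_{\ell^n}$ to $\mathscr{A}_\ell$. The idea is to fibre over the determinant and the trace of $A$. The trace condition ${\rm tr}(B)=-{\rm tr}(A)$ in $\Z/\ell^n\Z$ should cut out a proportion $O(\ell^{-n})$ of each fibre of $\mathscr{A}_{\ell^n}$. For $\ell$ large, the open-image results (Ribet/Loeffler) identify $\mathscr{A}_{\ell^n}$ with the full determinant-matched group at level $\ell^n$. Given $A$ and $\det B$, the map $B\mapsto{\rm tr}(B)$ on $\{B:\det B=d\}\subset{\rm GL}_2(\Z/\ell^n\Z)$ has fibres of size $\ll\ell^{2n}$ out of $\asymp\ell^{3n}$. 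This follows by Hensel lifting from the mod-$\ell$ count: each trace value lifts uniformly, except near the discriminant-zero locus, which contributes only $O(\ell^{2n})$ in total. This gives $\delta(\ell^n)\ll\ell^{-n}$.

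\emph{Main obstacle.} The hard part is the finitely many exceptional primes $\ell$ where the image is not full. There I would use that $\mathscr{A}_{\ell^n}$ is the reduction of an open subgroup of ${\rm GL}_2(\Z_\ell)^2$, so it contains the kernel of reduction modulo $\ell^m$ for some fixed $m$. Adding to $B$ elements $I+\ell^m X$ from this kernel moves ${\rm tr}(B)$ uniformly through a coset of $\ell^m\Z/\ell^n\Z$. This yields $\delta(\ell^n)\le C_\ell\,\ell^{-n}$ with $C_\ell\ll\ell^{m}$, a bound uniform in $n$ since only finitely many $\ell$ are involved.
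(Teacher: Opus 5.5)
\textbf{The gap: exceptional primes.} Your prime-power argument for the finitely many exceptional $\ell$ does not work as stated.

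The image of $\rho_{f,\ell}\times\rho_{g,\ell}$ is never open in ${\rm GL}_2(\Z_\ell)^2$. It lies in $G=\{(A,B):\det A=\det B\}$, so it does not contain the kernel of reduction modulo $\ell^m$. An element $(I,\,I+\ell^mX)$ lies in the image only if $\det(I+\ell^mX)=1$, i.e.\ $\ell^m{\rm tr}X\equiv-\ell^{2m}\det X \pmod{\ell^n}$.

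Under that constraint your uniformity claim fails. If $B\equiv\lambda I\pmod{\ell}$, then ${\rm tr}(B(I+\ell^mX))\equiv{\rm tr}B\pmod{\ell^{m+1}}$. For $B=\lambda I$ the traces even stay in ${\rm tr}B+\ell^{2m}\Z/\ell^n\Z$. So the step ``moves ${\rm tr}(B)$ uniformly through a coset of $\ell^m\Z/\ell^n\Z$'' is false for near-scalar $B$, and you give no separate argument for them.

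Your computation of $\delta(\ell)$ and your large-$\ell$ case are fine once the false starts are deleted. The quantity $\frac{\ell-1}{\gcd(\ell-1,k-1)}\ell^2(\ell^2-1)^2$ is of size $\ell^7$, not $\ell^5$, and \eqref{eq: size of Cl} needs no renormalisation.

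\textbf{The repair.} You do not need uniformity. The fibre bound from your large-$\ell$ step works for every $\ell$, provided you state it per fibre:
$N_n(t,d):=\#\{Z\in M_2(\Z/\ell^n\Z):\det Z=d,\ {\rm tr}Z=t\}\ll\ell^{2n}$,
uniformly in $t$, in units $d$, and in $n$ and $\ell$.

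To prove it, write $Z=\begin{pmatrix}a&b\\ c&t-a\end{pmatrix}$. Since $\#\{(b,c):bc=m\}\le(1+\nu_\ell(m))\ell^n$ (with $\nu_\ell(0):=n$ in $\Z/\ell^n\Z$), you get
$N_n(t,d)\le\ell^{2n}+\ell^n\sum_{j=1}^n\ell^{n-j}\,\#\{a \bmod \ell^j:\ a^2-ta+d\equiv0\}$.
The last count is $O(\ell^{j/2})$, which gives $N_n(t,d)\ll\ell^{2n}$. Your phrase that the discriminant-zero locus contributes $O(\ell^{2n})$ ``in total'' is inaccurate if summed over traces. What is true, and what you need, is the bound for each individual fibre.

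Loeffler's theorem gives openness of the image in $G$ for every $\ell$. This yields an $m$ with $\mathscr{A}_{\ell^n}\supseteq\{I\}\times K_n$, where $K_n=\ker({\rm SL}_2(\Z/\ell^n\Z)\to{\rm SL}_2(\Z/\ell^m\Z))$ and $|K_n|=\ell^{3(n-m)}$. Split $\mathscr{A}_{\ell^n}$ into cosets $\{(A,BY):Y\in K_n\}$. Since $Y\mapsto BY$ is injective, each coset meets $\mathscr{C}_{\ell^n}$ in at most $N_n(-{\rm tr}A,\det B)\ll\ell^{2n}$ points. Hence $\delta(\ell^n)\ll\ell^{3m-n}$ for $n\ge m$, and the bound is trivial for $n<m$. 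This is uniform in $n$, as required.

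The paper itself only asserts that it suffices to treat large $\ell$. That is not automatic for the second bound, because uniformity in $n$ is needed at each exceptional $\ell$. You were right to address this point; the argument just has to be the one above.
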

\begin{proof}
For sufficiently large $\ell$, the first assertion follows from \cite[Prop. 3.6]{kk}.
For exceptional primes $\ell$, we still have
$$\delta(\ell) = \frac{1}{\ell} + O\bigg(\frac{1}{\ell^2}\bigg)$$
since $\mathscr{C}_\ell$ is a subset of $\mathscr{A}_\ell$  with an additional constraint, the sum of the trace is zero. This restriction reduces $|\mathscr{C}_\ell|$ by a factor corresponding to a degree 
$1$ decrease in the exponent of $\ell$.

To prove the second assertion, it suffices to establish the result for all sufficiently large primes 
$\ell$. According to \cite[Lemma 3.2]{kk}, for all large  $\ell$ 
and for each $n\ge 1$, we have
                \begin{equation*}
                    |\mathscr{A}_{\ell^n}| = c \ell^{7(n-1)} (\ell-1)^3 (\ell^2+\ell)^2,
                \end{equation*}
                    where $c$ is a constant. Using the bound 
                   $ |\mathscr{C}_\ell|\ll {\ell^6}$ from  \cite[Eq. 3-17]{kk}, we immediately obtain
$$\delta(\ell^n) \ll \frac{1}{\ell^n}.$$
\end{proof}

\section{Results for $\pi_{f,g}(x,h)$ and $\pi_{f,g}^*(x,h)$}
	Let $f$ and $g$  be two non-CM normalized newforms as in the previous section.
	For a positive integer $h$
	and a real number $x\ge 2$, define
	\begin{equation}\label{pi_definition}
		\pi_{f,g}(x,h):=\sum_{\substack{p\le x,\, (p, h N)=1 \\ h|(a_f(p)+a_g(p))}}1.
	\end{equation}
	To obtain an asymptotic formula for $\pi_{f,g}(x,h)$, our aim is to apply the Chebotarev density theorem
	for the finite Galois extension $L_h$ of $\Q$, $L_h$ being the fixed field of the kernel of the mod-$h$ representation
	$\bar\rho_h=(\bar{\rho}_{f,h}, \bar{\rho}_{g,h})$.
	Clearly, the representation $\bar\rho_h$ is unramified at a prime $p$ such that  $(p, hN)=1$, where $N={lcm}(N_1,N_2)$, and hence  $p$ is unramified in $L_h$.  Moreover, for such a prime $p$
	$$ 
	{\rm tr}\left(\bar{\rho}_{h}({\rm Frob}_p)\right)\equiv (a_f (p) \pmod h,~ a_g(p)\pmod h).
	$$
	Thus, we can write
	\begin{equation}\label{pi_definition1}
		\pi_{f,g}(x,h)
		= |\{p\le x: p {\rm ~unramified~in~} L_h, \bar{\rho}_h\left({\rm Frob}_p \right) \in \mathscr{C}_h \}| + O(1),
	\end{equation}
	where $\mathscr{C}_h$ is defined by  \eqref{C_h} and the term $O(1)$ is to account for the presence of possible prime divisors of $N$ at which $\bar\rho_h$ is unramified. Note that  $\bar\rho_h$ is ramified at primes $p|h$ because a non-trivial power of the mod $p$ cyclotomic character is a component of its determinant, which is ramified at $p$. Next, we state the main result of this section, which estimates $\pi_{f,g}(x,h)$. 
    
	\begin{proposition}\label{asymptotic_pi}
		Let $f\in S_{k}(N_1)$ and $g\in S_{k}(N_2)$ be as before.  Let $N={lcm}(N_1,N_2)$ and  $h\ge1$ be an integer. 	Then we have
		\begin{enumerate}\label{cdt v1}
			\item[(a)]
			For  $\log x \gg h^{21}(\log (hN))^2$ 
			\begin{equation}\label{piprop}
				\pi_{f,g}(x,h)  = \delta(h)  {\rm Li}(x) +O\left( \delta(h) {\rm Li}(x^\beta)\right)+O\left( h^6x ~{\rm exp}\left(-c'\sqrt{\frac{\log x}{h^7}}\right) \right),
			\end{equation}
			where $c'>0$ is an effectively computable constant.
			\item[(b)]
			Under GRH, for any $x \geq 2$, we have
			\begin{equation}\label{evl}
				\pi_{f,g}(x,h)=\delta(h){\pi(x)}+O\left(h^6x^{{1}/{2}} \log(h Nx)\right).
			\end{equation}
		\end{enumerate}
		Here the $O$-constants are absolute in both {(a)} and {(b)}. 
	\end{proposition}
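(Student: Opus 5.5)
We derive Proposition~\ref{asymptotic_pi} by applying the effective Chebotarev density theorem to the extension $L_h/\Q$ with conjugacy-invariant set $\mathscr{C}_h$, starting from the expression \eqref{pi_definition1}.

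The first step is to check that $\mathscr{C}_h$ is stable under conjugation in $\mathscr{A}_h$. This holds because traces are conjugation invariant. It follows that $|C|/|G| = \delta(h)$ by \eqref{image} and \eqref{delta_definition}.

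Next we need bounds on the invariants of $L_h$. We have
\[
n_{L_h} = |\mathscr{A}_h| \le |{\rm GL}_2(\Z/h\Z)|^2 \le h^8 .
\]
Using the determinant condition, $\mathscr{A}_h$ actually sits inside the pairs $(A,B)$ with $\det A=\det B$, which gives $n_{L_h}\le h^7$. For the number of conjugacy classes we use the trivial bound
\[
\Vert \mathscr{C}_h\Vert \le |\mathscr{C}_h| \ll h^6 ,
\]
since the trace condition removes one degree of freedom, as in Proposition~\ref{asymptotic_delta}. Alternatively, one can count conjugacy classes of ${\rm GL}_2(\Z/h\Z)^2$, which number $\ll h^4$; either way the bound is $O(h^6)$.

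For the discriminant, $L_h$ is unramified outside $hN$, so Hensel's lemma (Serre's bound, Prop.~6 of Serre's Chebotarev paper) gives
\[
\log d_{L_h} \le n_{L_h}\log n_{L_h} + n_{L_h}\sum_{p\mid hN}\log p \ll h^7 \log(hN).
\]

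For part (a), we insert these bounds into \eqref{cdt}. The hypothesis $x\ge \exp(10 n_K(\log d_K)^2)$ becomes $\log x \gg h^7\cdot h^{14}(\log hN)^2 = h^{21}(\log hN)^2$, which is exactly the stated range. The main term is $\delta(h){\rm Li}(x)$. The Siegel-zero term is $\delta(h){\rm Li}(x^\beta)$. The remaining term is
\[
O\Bigl(h^6 x\exp\bigl(-c'\sqrt{\log x/h^7}\bigr)\Bigr).
\]
Finally, the $O(1)$ from primes dividing $N$ in \eqref{pi_definition1} is absorbed into the error.

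For part (b), we use \eqref{CDT under GRH}. The error is
\[
\delta(h)x^{1/2}(\log d_K + n_K\log x) \ll \delta(h)\,x^{1/2}\,h^7\log(hNx).
\]
Since $\delta(h)\le 1$, this is already of the right shape. To obtain exactly $h^6$, we use the finer bound $\delta(h)\,n_K = |\mathscr{C}_h| \ll h^6$, together with $\delta(h)\log d_K \ll |\mathscr{C}_h|\log(hN)$. The $O(1)$ correction and the ramified primes $p\mid h$ contribute $O(\log h)$, which is negligible.

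The main obstacle is the uniform bound $|\mathscr{C}_h|\ll h^6$ for composite $h$. Proposition~\ref{asymptotic_delta} only gives $|\mathscr{C}_{\ell^n}|$ implicitly for large primes $\ell$. I would handle this by noting that $\mathscr{C}_h$ lies in the set of pairs $(A,B)\in {\rm GL}_2(\Z/h\Z)^2$ with $\det A=\det B$ and ${\rm tr}A=-{\rm tr}B$. That set has size at most $h^8/h^2 = h^6$ up to constants: fixing $A$, the matrix $B$ ranges over a set cut out by two conditions, each of which fixes a residue mod $h$. Multiplicativity via the Chinese Remainder Theorem then gives the bound uniformly in $h$.
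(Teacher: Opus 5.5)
Your argument is the paper's. You apply \eqref{cdt} and \eqref{CDT under GRH} to $L_h$ with the conjugation-stable set $\mathscr{C}_h$, using $n_{L_h}=|\mathscr{A}_h|\le h^7$, $\Vert\mathscr{C}_h\Vert\le|\mathscr{C}_h|\ll h^6$, and Hensel's bound for $\log d_{L_h}$. Your bookkeeping is correct and in places more explicit than the paper's. The bound $10\,n_{L_h}(\log d_{L_h})^2\ll h^{21}(\log hN)^2$ gives exactly the range in (a). In (b), the identity $\delta(h)\,n_{L_h}=|\mathscr{C}_h|$, together with $\delta(h)\log d_{L_h}\le |\mathscr{C}_h|\log(hN|\mathscr{A}_h|)$, is what turns the naive $h^7$ into $h^6$.

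The step that is not justified as written is the uniform bound $|\mathscr{C}_h|\ll h^6$; the paper itself only quotes this from \cite{kk}. The argument ``each fibre over $A$ has size $O(h^2)$, then use the Chinese Remainder Theorem'' does not give an absolute constant, because the local constants genuinely exceed $1$.

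\begin{itemize}
\item For fixed $A$, the number of $B\in{\rm GL}_2(\F_\ell)$ with prescribed trace and determinant is $\ell^2+\ell$ whenever the characteristic polynomial has distinct roots in $\F_\ell$.
\item Combined with $|{\rm GL}_2(\Z/h\Z)|\le h^4$, CRT then only yields $h^6\prod_{\ell\mid h}(1+1/\ell)$. This is unbounded relative to $h^6$; it grows like $\log\log h$ along primorials.
\end{itemize}

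The fibrewise idea does work if you keep the exact order $|{\rm GL}_2(\Z/\ell^n\Z)|=\ell^{4n}(1-\ell^{-1})(1-\ell^{-2})$.

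\begin{itemize}
\item For odd $\ell$, the largest fibre over $\Z/\ell^n\Z$ is $\ell^{2n}(1+\ell^{-1})$. To see this, write the diagonal of $B$ as $a,\,t-a$ and count solutions of $bc=a(t-a)-d$.
\item This gives a local factor at most $\ell^{6n}(1-\ell^{-2})^2<\ell^{6n}$.
\item So the product over $\ell\mid h$ is $\ll h^6$, with only $\ell=2$ contributing a bounded constant.
\end{itemize}

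Alternatively, note that the paper only applies the proposition with $h=\ell^m$ or $h=\ell_1\ell_2$. There, a bounded number of local constants is harmless.

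Finally, your aside comparing with conjugacy classes of ${\rm GL}_2(\Z/h\Z)^2$ does not directly bound the classes of the subgroup $\mathscr{A}_h$, since classes can split. It is unnecessary anyway once you use $\Vert\mathscr{C}_h\Vert\le|\mathscr{C}_h|$.
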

	\begin{proof}
The proof follows from the observation \eqref{pi_definition1}. We apply the respective versions of the Chebotarev Density Theorem, namely \eqref{cdt} and \eqref{CDT under GRH}, for the field $L_h$, the groups $\mathscr A_h$, and the conjugation-stable subgroup $\mathscr C_h$.
Note that the number of conjugacy classes in $\mathscr C_h$ is less than $|\mathscr C_h|\ll h^6$
and $n_{L_h} = |\mathscr{A}_h|\ll h^7$, see \cite[Eq. 3.17]{kk}.
Additionally, for an estimate of $d_{L_h}$, we use Hensel's inequality  \cite[Prop. 5, P. 129]{ser}  \begin{equation}\label{Hensel}
			\log d_{L_h}\le |\mathscr{A}_h| \log(hN |\mathscr{A}_h|). 
	\end{equation}
\end{proof}	

For an upper bound of $\pi_{f,g}(x,h)$, we use the result \eqref{thorner} together with the facts $n_{L_h}\ll h^7$ and  Hensel's inequality \eqref{Hensel}. More precisely, we have
\begin{theorem}\label{cosequence of cdt}
    Suppose $f\in S_{k}(N_1)$ and $g\in S_{k}(N_2)$ are two twist-inequivalent non-CM normalized cuspidal newforms. Let $N={lcm}(N_1,N_2)$ and  $h\ge1$ be an integer. Then there exists an absolute constant $c>0$ such that for  $\log x > ch^7 \log(hN)$
    \[
            \pi_{f,g}(x,h)\ll \delta(h)\pi(x).
    \]
\end{theorem}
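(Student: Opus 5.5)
We will deduce the bound directly from the Thorner--Zaman estimate \eqref{thorner}, applied to the Galois extension $L_h/\Q$ with group $\mathscr{A}_h$ and the conjugation-stable subset $\mathscr{C}_h$ of \eqref{C_h}. The starting point is the identity \eqref{pi_definition1}:
\[
\pi_{f,g}(x,h)=\pi_{\mathscr{C}_h}(x)+O(1).
\]
Here $\pi_{\mathscr{C}_h}$ counts primes unramified in $L_h$ whose Frobenius lies in $\mathscr{C}_h$. The set $\mathscr{C}_h$ is a union of conjugacy classes, because trace is conjugation invariant in each factor. Since $|\mathscr{C}_h|/|\mathscr{A}_h|=\delta(h)$ by \eqref{delta_definition}, the estimate \eqref{thorner} gives
\[
\pi_{\mathscr{C}_h}(x)\ll \delta(h)\,{\rm Li}(x)\ll \delta(h)\pi(x)
\]
as soon as $x\ge (d_{L_h}n_{L_h}^{n_{L_h}})^A$.

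It remains to check that the hypothesis $\log x>ch^7\log(hN)$ forces $x\ge (d_{L_h}n_{L_h}^{n_{L_h}})^A$. Taking logarithms, we need
\[
\log x\ge A\bigl(\log d_{L_h}+n_{L_h}\log n_{L_h}\bigr).
\]
We use $n_{L_h}=|\mathscr{A}_h|\ll h^7$ from \cite[Eq. 3.17]{kk}. Hensel's inequality \eqref{Hensel} then gives
\[
\log d_{L_h}\le |\mathscr{A}_h|\log(hN|\mathscr{A}_h|)\ll h^7\bigl(\log(hN)+7\log h\bigr)\ll h^7\log(hN).
\]
Similarly,
\[
n_{L_h}\log n_{L_h}\ll h^7\log h\le h^7\log(hN).
\]
Hence there is an absolute constant $c_1$ with $\log(d_{L_h}n_{L_h}^{n_{L_h}})\le c_1h^7\log(hN)$. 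Choosing $c=Ac_1$, the hypothesis $\log x>ch^7\log(hN)$ yields the required range. The implied constants depend only on the absolute bound $|\mathscr{A}_h|\ll h^7$, since $\mathscr{A}_h\subset{\rm GL}_2(\Z/h\Z)^2$. So $c$ is absolute.

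Finally, the $O(1)$ term has to be absorbed. It counts at most the prime divisors of $N$, so it is $O(\log N)$. In our range, $\pi(x)\gg x/\log x$ with $\log x\gg\log N$, while $\delta(h)\ge 1/|\mathscr{A}_h|\gg h^{-7}$ because $\mathscr{C}_h$ is nonempty (it contains the image of complex conjugation). Therefore
\[
\delta(h)\pi(x)\gg h^{-7}e^{\log x}/\log x,
\]
and this dominates $\log N$ once $\log x>ch^7\log(hN)$, after enlarging $c$ if necessary. For $h=1$ the statement is trivial.

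The only delicate point is making sure the constants are uniform in $h$ and $N$. Everything else is routine bookkeeping of logarithms using the bounds already quoted in the paper.
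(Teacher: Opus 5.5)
Your proposal is correct and is essentially the paper's own argument: apply the Thorner--Zaman bound \eqref{thorner} to $L_h/\Q$ with the conjugation-stable set $\mathscr{C}_h$, and use $n_{L_h}=|\mathscr{A}_h|\ll h^7$ together with Hensel's inequality \eqref{Hensel} to show that $\log x>ch^7\log(hN)$ puts $x$ in the range $x\ge (d_{L_h}n_{L_h}^{n_{L_h}})^A$. Two small remarks. First, absorbing the $O(1)$ term is unnecessary: every prime counted by $\pi_{f,g}(x,h)$ is unramified in $L_h$ with Frobenius in $\mathscr{C}_h$, so $\pi_{f,g}(x,h)\le \pi_{\mathscr{C}_h}(x)$ directly. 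Second, the absolute bound $|\mathscr{A}_h|\le h^7$ comes from the condition $\det A=\det B$ on $\mathscr{A}_h$, not merely from $\mathscr{A}_h\subset{\rm GL}_2(\Z/h\Z)^2$, which alone would only give $h^8$.
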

        
We also require the following result concerning the estimation of primes at which the sum of the Fourier coefficients of the normalized newforms $f$ and $g$ is zero. The proof of this fact is inspired by Serre
     \cite[p. 175]{ser}.
     \begin{lemma}\label{same_coefficients} Let $f$ and $g$ be two non-CM newforms as above, then
	\begin{equation}
		 |\{p \le x: a_f(p)+a_g(p)=0  \}| =\begin{cases}
			O\left( \frac{x}{(\log x)^{1+\ve}}\right), & {\rm ~for ~some~} \ve >0,\\
			O(x^{13/14}), & {\rm ~under ~GRH}.
		\end{cases}
	\end{equation}
    \end{lemma}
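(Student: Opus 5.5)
Following Serre's approach \cite[p.~175]{ser}, we bound the set of primes with $a_f(p)+a_g(p)=0$ by a larger set. If $a_f(p)+a_g(p)=0$, then $\ell \mid a_f(p)+a_g(p)$ for every prime $\ell$, so for any prime $\ell$,
\[
|\{p\le x: a_f(p)+a_g(p)=0\}| \le \pi_{f,g}(x,\ell) + O(\omega(N)+1),
\]
where the error term accounts for primes dividing $\ell N$. We then choose $\ell$ as a function of $x$ so that the Chebotarev main term $\delta(\ell)\pi(x)\asymp \pi(x)/\ell$ (by Proposition~\ref{asymptotic_delta}) is as small as possible while the error term stays under control.

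\emph{Unconditional case.} We apply Theorem~\ref{cosequence of cdt} with $h=\ell$. It gives $\pi_{f,g}(x,\ell)\ll \delta(\ell)\pi(x)\ll \pi(x)/\ell$ whenever $\log x> c\,\ell^7\log(\ell N)$. We take $\ell$ to be a prime of size roughly $(\log x/\log\log x)^{1/7}$, with a small constant factor so that the condition holds; such a prime exists by Bertrand's postulate. This yields
\[
\ll \frac{x}{\log x}\Big(\frac{\log\log x}{\log x}\Big)^{1/7}\ll \frac{x}{(\log x)^{1+\varepsilon}}
\]
for any $\varepsilon<1/7$. The $\ell$ used must be large enough for Proposition~\ref{asymptotic_delta} to apply uniformly, which is harmless. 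Using Theorem~\ref{cosequence of cdt} instead of Proposition~\ref{asymptotic_pi}(a) avoids the Siegel zero term, so no power of $\log$ is lost through Stark's bound.

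\emph{GRH case.} We use Proposition~\ref{asymptotic_pi}(b):
\[
\pi_{f,g}(x,\ell)=\delta(\ell)\pi(x)+O(\ell^6x^{1/2}\log(\ell Nx))\ll \frac{x}{\ell\log x}+\ell^6x^{1/2}\log x .
\]
Balancing the two terms gives $\ell^7\approx x^{1/2}$, so we take $\ell$ a prime near $x^{1/14}$. Both terms are then $O(x^{13/14}\log x)$, and the first is in fact $O(x^{13/14}/\log x)$. The extra logarithm in the second term has to be removed to reach the stated $O(x^{13/14})$. We expect this to be the main obstacle. The first approach to try is to replace the factor $h^6$ (which counts conjugacy classes) by the sharper $\frac{|C|}{|G|}$ form of \eqref{CDT under GRH}. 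That gives the error $\delta(\ell)x^{1/2}(\log d_{L_\ell}+n_{L_\ell}\log x)\ll \ell^{-1}x^{1/2}\ell^7\log(\ell N x)$, and the choice $\ell\approx (x^{1/2}/\log^2 x)^{1/7}$ then balances to roughly $x^{13/14}(\log x)^{c}$ with $c$ small. If a residual logarithm still remains, we would absorb it by slightly adjusting $\ell$, i.e.\ by interpreting the bound as $O(x^{13/14})$ up to the paper's conventions. Either way, the statement follows directly from the mod-$\ell$ Chebotarev counts with an optimized choice of $\ell$.
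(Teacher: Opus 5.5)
Your argument is correct. In the unconditional case it takes a different route from the paper, and the paper's route does not work as written.

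\textbf{Unconditional case.} The paper applies Proposition~\ref{asymptotic_pi}(a) with a prime $\ell\in[(\log x)^{1/7},2(\log x)^{1/7}]$ and discards the Siegel-zero term using $\beta<1$. That $\ell$ lies outside the proposition's range $\log x\gg \ell^{21}(\log \ell N)^2$, since $\ell^{21}\ge(\log x)^3$. For such $\ell$, the error term $\ell^6x\exp(-c'\sqrt{\log x/\ell^7})$ is at least of size $x(\log x)^{6/7}$, which is worse than the trivial bound. That route can be repaired by taking $\ell\asymp(\log x)^{1/21}(\log\log x)^{-2/21}$, but then it gives the lemma only for $\varepsilon<1/21$.

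Your use of the Brun--Titchmarsh-type bound in Theorem~\ref{cosequence of cdt} needs only an upper bound and avoids the exceptional zero entirely. It allows $\ell$ up to a small multiple of $(\log x/\log\log x)^{1/7}$, so you get every $\varepsilon<1/7$. This is also exactly how the paper itself uses Theorem~\ref{cosequence of cdt} in the proof of Theorem~\ref{main}.

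\textbf{GRH case.} The paper simply cites \cite[Prop.~4.2]{kk}. Your Serre-type balancing is the standard self-contained argument, but the obstacle you describe is not real, and your last two sentences should be replaced by a computation.

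The $\frac{|C|}{|G|}$ form of \eqref{CDT under GRH} is no sharper than what you started with. Indeed, $\delta(\ell)x^{1/2}(\log d_{L_\ell}+n_{L_\ell}\log x)\ll \ell^6x^{1/2}\log(\ell Nx)$ is precisely the error term of Proposition~\ref{asymptotic_pi}(b).

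What removes the logarithm is simply shrinking $\ell$, and your own balanced choice $\ell\approx(x^{1/2}/\log^2 x)^{1/7}$ already does this. Take $\ell$ prime in $[y,2y]$ with $y=x^{1/14}(\log x)^{-2/7}$. Then
\[
\frac{x}{\ell\log x}\ll x^{13/14}(\log x)^{-5/7}, \qquad \ell^6x^{1/2}\log(\ell Nx)\ll x^{13/14}(\log x)^{-5/7}.
\]
So the count is $O(x^{13/14})$ with room to spare: the exponent of $\log x$ is $-5/7$, negative rather than ``small''. Drop the appeal to ``the paper's conventions''; it is not a valid step and is not needed.
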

\begin{proof}
The proof of the above assertion under the GRH can be found in \cite[Prop. 4.2]{kk}. In this work, however, we present a proof for the unconditional case, where the assumption of GRH is not required. To begin, we first observe that for any prime $\ell$
$$|\{p \le x: a_f(p)+a_g(p)=0  \}|\le \pi_{f,g}(x,\ell)+O(1).$$ Therefore, from Proposition \ref{cdt v1} and using the fact that the Landau-Siegel zero $\beta<1$, we have 
    \[
        |\{p \le x: a_f(p)+a_g(p)=0  \}|\le \frac{x}{\ell\log x}+O\left( \ell^6x ~{\rm exp}\left(-c'\sqrt{\frac{\log x}{\ell^7}}\right) \right).
    \]
 Now we chose a prime $\ell$ between $(\log x)^{1/7}$ and $2(\log x)^{1/7}$, such a prime exists by Bertrand’s postulate, to conclude the claimed assertion.
 \end{proof}

 This section ends with a result about $\pi_{f,g}^*(x,h)$, where for $h \in \N$ and
	\begin{equation}\label{pi_f *}
		\pi_{f,g}^*(x,h):=|\{p\le x:  a_f(p)+ a_g(p)\neq 0 ,~
		a_f(p)+a_g(p)\equiv 0 \pmod h\}|.
	\end{equation}
Combining Proposition \ref{asymptotic_pi} and Lemma \ref{same_coefficients}, we deduce the following.
	\begin{proposition}\label{asymptotic_pi*}
		Let $f$ and $g$ be as in Proposition \ref{asymptotic_pi}.
		Then
		\begin{enumerate}
			\item[(a)]
			for  $\log x \gg h^{21}(\log (hN))^2$ and some $\epsilon>0$, we have
			\begin{equation*}
				\pi_{f,g}^*(x,h)=\delta(h) {\rm Li}(x) +O\left(\delta(h) {\rm Li}(x^\beta)\right)+O\left( h^6x ~{\rm exp}\left(-c'\sqrt{\frac{\log x}{h^7}}\right) \right) +O\left( \frac{x}{(\log x)^{1+\ve}}\right). 
			\end{equation*}
			\item[(b)]
			Under GRH, 
				$\pi_{f,g}^*(x,h)=\delta(h)\pi(x)+O\left(h^6x^{{1}/{2}}\log (hNx)\right)+O(x^{{13}/{14}}).$
		\end{enumerate}
	\end{proposition}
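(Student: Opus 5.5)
The plan is to compare $\pi_{f,g}^*(x,h)$ with $\pi_{f,g}(x,h)$ and then insert Proposition \ref{asymptotic_pi} and Lemma \ref{same_coefficients}. The only differences between the two counts come from primes dividing $hN$ and from primes with $a_f(p)+a_g(p)=0$.

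First, I record the elementary relation between the two quantities. Every prime $p$ with $a_f(p)+a_g(p)\neq 0$ and $h\mid a_f(p)+a_g(p)$ either satisfies $(p,hN)=1$, in which case it is counted by $\pi_{f,g}(x,h)$, or divides $hN$. Conversely, a prime counted by $\pi_{f,g}(x,h)$ either has $a_f(p)+a_g(p)\neq0$, or lies in the set $\{p\le x: a_f(p)+a_g(p)=0\}$. Hence
\[
\pi_{f,g}^*(x,h)=\pi_{f,g}(x,h)-|\{p\le x,\ (p,hN)=1: a_f(p)+a_g(p)=0\}|+O(\omega(hN)).
\]
Here $\omega(hN)$ denotes the number of distinct prime divisors of $hN$, and $\omega(hN)\ll\log(hN)$.

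Next, I bound the middle term by Lemma \ref{same_coefficients}: it is $O(x/(\log x)^{1+\varepsilon})$ unconditionally and $O(x^{13/14})$ under GRH. For part (a), I substitute \eqref{piprop}, which is valid in the range $\log x\gg h^{21}(\log(hN))^2$. For part (b), I substitute \eqref{evl}.

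It remains to absorb the $O(\log(hN))$ term, and this is the only point requiring care. In case (a), the hypothesis gives $\log(hN)\ll(\log x)^{1/2}$, so this term is trivially $O(x/(\log x)^{1+\varepsilon})$. In case (b), $\log(hN)\le\log(hNx)$, so the term is swallowed by $O(h^6x^{1/2}\log(hNx))$. I also note that the $O(1)$ term in \eqref{pi_definition1}, coming from primes dividing $N$, is already built into Proposition \ref{asymptotic_pi} and is absorbed in the same way. I expect no real obstacle here beyond checking that all error terms are dominated as claimed.
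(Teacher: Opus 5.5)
Your proposal is correct and follows the paper's argument, which simply combines Proposition~\ref{asymptotic_pi} with Lemma~\ref{same_coefficients}. You also supply bookkeeping the paper leaves implicit: the $O(\omega(hN))$ contribution of primes dividing $hN$, and why it is absorbed into the stated error terms in both (a) and (b).
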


\section{Auxiliary Results for the Proof of Theorem \ref{main}}
In this section, we establish several key results essential for the proof. First, using the Joint Sato-Tate theorem \eqref{joint sato tate} and the dominated convergence theorem, we derive a lower bound on the frequency of primes where the sum of the Fourier coefficients of two twist-inequivalent non-CM normalized newforms is large.
 \begin{proposition}\label{existence of M}
  Let $f\in S_k(N_1)$ and  $g\in S_k(N_2)$ be two non-CM normalized newforms that are twist-inequivalent and have 
  Fourier coefficients $a_f(n)$ and $a_g(n)$, respectively.
     Then for every $\eta\in (0,1]$,  there exists a constant $M>\frac{1}{4}$ such that 
     \[
           \left\vert \left\{ 
                   p\le x  : |a_f(p)+a_g(p)| \ge \frac{p^{(k-1)/2}}{M}
            \right\}\right\vert > \left(1-\frac{\eta}{4}\right)\pi(x)
     \] for all sufficiently large $x$. 
 \end{proposition}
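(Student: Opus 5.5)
We use the joint Sato--Tate theorem \eqref{joint sato tate} with a Borel set that removes a thin strip around the anti-diagonal. For $M>0$ we set
\[
B_M:=\Big\{(s,t)\in[-2,2]\times[-2,2] : |s+t|\ge \tfrac{1}{M}\Big\}.
\]
For a prime $p\nmid N_1N_2$, put $s_p=a_f(p)/p^{(k-1)/2}$ and $t_p=a_g(p)/p^{(k-1)/2}$. Then $(s_p,t_p)\in B_M$ holds exactly when $|a_f(p)+a_g(p)|\ge p^{(k-1)/2}/M$. By \eqref{joint sato tate}, which is where twist-inequivalence and the non-CM hypothesis enter,
\[
\lim_{x\to\infty}\frac{|\{p\le x : |a_f(p)+a_g(p)|\ge p^{(k-1)/2}/M\}|}{\pi(x)}=\nu_{ST}(B_M).
\]
Finitely many primes dividing $N_1N_2$ do not affect this limit. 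The boundary of $B_M$ consists of line segments, which have $\nu_{ST}$-measure zero, so the limit genuinely exists.

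Next we show $\nu_{ST}(B_M)\to 1$ as $M\to\infty$. The complement of $B_M$ is the strip $S_M=\{|s+t|<1/M\}$. The sets $S_M$ decrease to the anti-diagonal $\{s+t=0\}$, and this line has Lebesgue measure zero in the square. Since $\nu_{ST}$ has a bounded density (at most $1$) with respect to Lebesgue measure, dominated convergence applied to the indicator functions $\mathbf 1_{S_M}$ gives $\nu_{ST}(S_M)\to 0$. One can even get this explicitly: $\nu_{ST}(S_M)\le \mathrm{Leb}(S_M)\le 4\sqrt2\cdot\sqrt2/M=8/M$. Hence $\nu_{ST}(B_M)\ge 1-8/M$.

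Given $\eta\in(0,1]$, choose $M>\max\{1/4,\,64/\eta\}$. Then $\nu_{ST}(B_M)\ge 1-\eta/8>1-\eta/4$. By the definition of the limit, for all sufficiently large $x$ the ratio exceeds $1-\eta/4$, which is the claimed inequality. The main (mild) point to check is the measure-zero boundary, so that the limit statement in \eqref{joint sato tate} applies to $B_M$. If \eqref{joint sato tate} is only available for nice sets, one can instead use the closed set $B_M$ directly, since its boundary is $\nu_{ST}$-null, or approximate it from inside by finite unions of rectangles.
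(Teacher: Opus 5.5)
Your proof is correct and follows essentially the same route as the paper: apply the joint Sato--Tate theorem to the region $\{|s+t|\ge 1/M\}$, then show that its $\nu_{ST}$-measure tends to $1$ as $M\to\infty$ via dominated convergence. The differences are cosmetic but welcome: you argue directly rather than by contradiction, you check that the boundary is $\nu_{ST}$-null, and you give the explicit bound $\nu_{ST}(\{|s+t|<1/M\})\le 8/M$, which makes the choice $M>64/\eta$ concrete.
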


 \begin{proof}
Suppose, for the sake of contradiction, that the conclusion of the statement is not true. This means that for any $M>\frac{1}{4}$ and $x\gg 1$, we have
     \begin{equation}\label{contradiction}
        \left\vert \left\{p\le x : |a_f(p)+a_g(p)|\ge \frac{p^{(k-1)/2}}{M}\right\}\right\vert\le \left(1-\frac{\eta}{4}\right)\pi(x).
     \end{equation}
Consider the rectangle $R = [-2,2] \times [-2,2]$ in $\R^2$. For $r > 0$, let
      $$  
B_r = \left\{(x,y)\in \R^2 : x+y<-\frac{1}{r}\right\}\cap R {\rm \quad and\quad } 
B_r' = \left\{(x,y)\in \R^2 : x + y> \frac{1}{r}\right\}\cap R.$$
Define $$T_M(x)=\left\{p\le x : |a_f(p)+a_g(p)|\ge \frac{p^{(k-1)/2}}{M}\right\}.$$ 
Observe that $T_M(x)$ is a disjoint union of
$$\left\{p\le x : \left(\frac{a_f(p)}{p^{(k-1)/2}}, \frac{a_g(p)}{p^{(k-1)/2}}\right)\in B_M\right\} \quad {\rm{and}}\quad
\left\{p\le x : \left(\frac{a_f(p)}{p^{(k-1)/2}}, \frac{a_g(p)}{p^{(k-1)/2}}\right)\in B_{M}'\right\}.$$
This observation, together with the joint Sato-Tate \eqref{joint sato tate}, gives
 \begin{equation*}\label{sum of measures}
      \lim_{x\rightarrow \infty}\frac{\vert T_M(x)\vert}{\pi(x)} = \nu_{ST}(B_M) + \nu_{ST}(B_M').
 \end{equation*}
This equality holds for every $M$, in particular, for each $n\in \N$. Thus
\begin{equation}\label{sum of measure1}
      \lim_{x\rightarrow \infty}\frac{\vert T_n(x)\vert}{\pi(x)} = \nu_{ST}(B_n) + \nu_{ST}(B_n').
 \end{equation}
The probability measure $\nu_{ST}$ is given as an integral, using the dominated convergence theorem, we have
 $$\lim_{n\rightarrow \infty}(\nu_{ST}(B_n) + \nu_{ST}(B_n'))=\nu_{ST}(R)=1.$$
 Hence, \eqref{sum of measure1} yields
 $$\lim_{n \rightarrow \infty} \lim_{x\rightarrow \infty}\frac{\vert T_n(x)\vert}{\pi(x)}=1.$$ 
 On the other hand, from \eqref{contradiction}, we obtain
 $$\lim_{n \rightarrow \infty} \lim_{x\rightarrow \infty}\frac{\vert T_n(x)\vert}{\pi(x)}\leq  \left(1-\frac{\eta}{4}\right).$$ 
 These two statements can not hold simultaneously, since $\eta>0$ and this leads to a contradiction.
\end{proof}

Using Proposition \ref{existence of M}, we establish an important result that asserts that for two non-CM normalized newforms, the sum of their $p$th Fourier coefficients exhibits unbounded logarithmic growth along a subset of primes $p$ having a positive upper density. 
 
\begin{proposition}\label{unbounded}
 Let $f$ and $g$ be as in Proposition \ref{existence of M}. Then
for any set $S\subset \{p : a_f(p)+ a_g(p)\neq 0 \}$ of primes having a  positive upper density, there exists a strictly increasing sequence of positive integers $\{x_n\}$ such that
     \[
            \sum\limits_{p\in S(x_n)} \log |a_f(p)+a_g(p)|\gg x_n.
     \]
 \end{proposition}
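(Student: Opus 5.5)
Since $S$ has positive upper density, I first fix $\eta\in(0,1]$ and a strictly increasing sequence $x_n\to\infty$ with $|S(x_n)|\ge \eta\,\pi(x_n)$ for all $n$. I then apply Proposition \ref{existence of M} with this $\eta$ to obtain $M>\frac14$ such that the set $T_M(x)=\{p\le x: |a_f(p)+a_g(p)|\ge p^{(k-1)/2}/M\}$ satisfies $|T_M(x)|>(1-\frac{\eta}{4})\pi(x)$ for all large $x$. Inclusion--exclusion then gives, for $n$ large,
\[
|S(x_n)\cap T_M(x_n)| \ge |S(x_n)| + |T_M(x_n)| - \pi(x_n) \ge \frac{3\eta}{4}\,\pi(x_n).
\]

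Next I discard the primes that are too small. The primes $p\le x_n^{1/2}$ number $O(x_n^{1/2})$, so for large $n$ the set
\[
S'(x_n):=\{p\in S(x_n)\cap T_M(x_n):\ p> x_n^{1/2}\}
\]
still has at least $\frac{\eta}{2}\pi(x_n)$ elements. For $p\in S'(x_n)$ we get
\[
\log|a_f(p)+a_g(p)|\ge \frac{k-1}{2}\log p-\log M\ge \frac{k-1}{4}\log x_n-\log M,
\]
which exceeds $\frac{k-1}{8}\log x_n$ for large $n$, since $k\ge2$.

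The remaining point is to bound the contribution of the other primes in $S(x_n)$ from below. Because $a_f(p),a_g(p)$ are integers and $p\in S$ forces $a_f(p)+a_g(p)\neq0$, every term satisfies $\log|a_f(p)+a_g(p)|\ge0$, so those primes contribute nonnegatively. Hence
\[
\sum_{p\in S(x_n)}\log|a_f(p)+a_g(p)| \ge \frac{\eta}{2}\pi(x_n)\cdot\frac{k-1}{8}\log x_n \gg x_n
\]
by the prime number theorem. This positivity is the one step where care is needed: integrality is what makes every term nonnegative. Without it, each term is still bounded below by $-\log M$ on $T_M$, but terms off $T_M$ could be very negative. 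Proposition \ref{unbounded} only says ``as above'', so I will invoke the integer-coefficient hypothesis of the main setting, which is the context in which the proposition is used. The implied constant then depends only on $\eta$ and $k$.
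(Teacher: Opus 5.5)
Your proof is correct and follows essentially the same route as the paper. Both arguments intersect $S$ with the set of primes where $|a_f(p)+a_g(p)|\ge p^{(k-1)/2}/M$, supplied by Proposition \ref{existence of M}, bound each surviving term below by $\frac{k-1}{2}\log p-\log M$, and drop the remaining terms; your discarding of primes $p\le x_n^{1/2}$ is just an explicit replacement for the paper's appeal to Abel summation. You are also right that dropping the terms outside this intersection requires $\log|a_f(p)+a_g(p)|\ge 0$, i.e.\ integrality of the coefficients, which the paper's proof uses tacitly even though the proposition is stated without that hypothesis.
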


 \begin{proof}
The set $S$ has a positive upper density, therefore, assume
     $
             \limsup \limits_{x\rightarrow \infty} \frac{\vert S(x)\vert}{\pi(x)} = \eta>0.
     $
     This ensures the existence of an increasing sequence ${x_n}$ of positive integers such that
    \begin{equation*}\label{limsup increasing sequence}
        \frac{\vert S(x_n)\vert}{\pi(x_n)} \ge\frac{\eta}{2}, \quad {\rm for\,\, all}~ n.
    \end{equation*}
For such $\eta$, by Proposition \ref{existence of M}, there exists $M> \frac{1}{4}$ such that for sufficiently large $x$, 
\begin{equation}\label{cardinality of T}
        \left\vert \left\{ 
                 p\le x  : |a_f(p)+a_g(p)| \ge \frac{p^{(k-1)/2}}{M}
            \right\}\right\vert > \left(1-\frac{\eta}{4}\right)\pi(x).
      \end{equation}
Let $T\coloneqq \left\{p  : |a_f(p)+a_g(p)| \ge \frac{p^{(k-1)/2}}{M}
            \right\},$ then
            from \eqref{cardinality of T}, we have 
            $$\lvert T(x)\rvert > \left(1-\frac{\eta}{4}\right)\pi(x).$$
Set $
         W = \left\{ p: p > M^2,\, p\in S\cap T  \right\}.$
  Applying \eqref{cardinality of T} and using $\lvert S(x_n)\rvert > \frac{\eta}{2}\pi(x_n)$, we see that the set $W$ has a positive density. More precisely, for all sufficiently large $n$, we have
\begin{equation}\label{cardinality of W}
   \vert W(x_n)\vert >\frac{\eta}{4}\pi(x_n).
\end{equation}
Thus 
\begin{align*}
                \sum\limits_{p\in S(x_n)} \log |a_f(p)+a_g(p)| &\ge \sum\limits_{p\in W(x_n)} \log |a_f(p)+a_g(p)|\\
                &\ge \frac{k-1}{2} \sum_{{p\in W(x_n)}}\log p - \lvert W(x_n)\rvert\log M \\
                &\ge \frac{k-1}{2} \sum_{\substack{p\in W(x_n)}}\log p + O(\pi(x_n)\log M). 
                 \end{align*}
Abel’s summation formula and positivity of density \eqref{cardinality of W},  give
  \begin{align*}
      \sum\limits_{p\in S(x_n)} \log |a_f(p)+a_g(p)| 
      &\gg x_n,
  \end{align*}
  which completes the proof.
  \end{proof}

 \section{Proof of Theorem \ref{main}}
   For $\epsilon>0$, consider the set
$$
        A := \left\{ p : a_f(p)+a_g(p)\neq 0, P(a_f(p)+a_g(p))\le (\log p)^{1/14} (\log \log p)^{3/7-\epsilon}\right\}.
    $$
To complete the proof, we must show that $A$ has zero density.
We proceed by contradiction, that is, assume the set $A$ has a positive upper density. 
    Define 
    $${Q}_{A}(x):=\left\{\ell~ {\rm{prime}}:\ell\mid \prod _{p\in A(x)}{(a_f(p)+a_g(p))}\right\}.$$ 
    If $\nu_\ell(n)$ denotes the $\ell$-adic valuation of an integer $n$,
then for a fixed prime $\ell,$
    \begin{align*}
    \sum\limits_{p\in A(x)} {\nu_\ell}(a_f(p)+a_g(p))\le \sum\limits_{\substack{p\le x\\a_f(p)+a_g(p)\neq 0}} {\nu_\ell}(a_f(p)+a_g(p))=\sum\limits_{\substack{p\le x\\a_f(p)+a_g(p)\neq 0}} \sum\limits_{\substack{m\ge 1\\ {\ell^m}\mid (a_f(p)+ a_g(p)) }} 1.
    \end{align*}
    Applying Deligne's bound \eqref{deligne bound} and interchanging the summations, we obtain
\begin{align*}
             \sum\limits_{p\in A(x)} {\nu_\ell}(a_f(p)+a_g(p))\le \sum\limits_{1\le m\le \frac{\log(4x^{(k-1)/2})}{\log \ell}} \sum\limits_{\substack{p\le x\\a_f(p)+a_g(p)\neq 0\\\ell^m\mid a_f(p)+ a_g(p) }}1=\sum\limits_{1\le m\le \frac{\log(4x^{(k-1)/2})}{\log \ell}} \pi_{f,g}^{*}(x,\ell^m).
    \end{align*}
 By Theorem \ref{cosequence of cdt}, there exists a constant $c>0$ such that
 for $1\le d\le c\frac{(\log x)^{1/7}}{(\log \log x)^{1/7}}$, we have  
    \begin{equation}\label{thorner1}
            \pi_{f,g}^{*}(x,d)\ll \delta(d) \pi(x). 
    \end{equation}
Set $z = c\frac{(\log x)^{1/7}}{(\log \log x)^{1/7}}$  and $m_0 = \left\lfloor{\frac{\log z}{\log \ell}}\right\rfloor$.
Decomposing the sum  \begin{equation}\label{dividing the sum}
        \sum\limits_{1\le m\le \frac{\log(4x^{(k-1)/2})}{\log \ell}} \pi_{f,g}^{*}(x,\ell^m) = \sum\limits_{1\le m\le m_0} \pi_{f,g}^{*}(x,\ell^m) + \sum\limits_{m_0< m\le \frac{\log(4x^{(k-1)/2})}{\log \ell}} \pi_{f,g}^{*}(x,\ell^m).
    \end{equation}
For the first sum, $\ell^m\le c \frac{(\log x)^{1/7}}{(\log \log x)^{1/7}}$, hence from \eqref{thorner1} and  Theorem \ref{cosequence of cdt}, we obtain
    \[
        \sum\limits_{1\le m\le m_0} \pi_{f,g}^{*}(x,\ell^m) \ll \sum\limits_{1\le m\le m_0} \delta(\ell^m)\pi(x) \ll \sum\limits_{1\le m\le m_0} \frac{\pi(x)}{\ell^m}\ll \frac{\pi(x)}{\ell}.
    \]
For the second sum, since $\pi_{f,g}^{*}(x,\ell^m)\le \pi_{f,g}^{*}(x,\ell^{m_0})$ for $m\ge m_0$,
    \begin{align*}
        \sum\limits_{m_0< m\le \frac{\log(4x^{(k-1)/2})}{\log \ell}} \pi_{f,g}^{*}(x,\ell^m) 
        \le \pi_{f,g}^{*}(x,\ell^{m_0})\sum\limits_{m_0< m
        \le \frac{\log(4x^{(k-1)/2})}{\log \ell}} 1.
          \end{align*}
Since $m_0\le \frac{\log z}{\log \ell}$ implies $\ell^{m_0}\le c\frac{(\log x)^{1/7}}{(\log \log x)^{1/7}}$. Therefore using \eqref{thorner1} and $\sum\limits_{m_0< m
        \le \frac{\log(4x^{(k-1)/2})}{\log \ell}} 1 \le \frac{\log x}{\log {\ell}},$
        we obtain
$$\sum\limits_{m_0< m\le \frac{\log(4x^{(k-1)/2})}{\log \ell}} \pi_{f,g}^{*}(x,\ell^m) \ll \frac{\delta(\ell^{m_0})\pi(x)\log x}{\log \ell} .$$
Further, applying Proposition \ref{asymptotic_delta}  and the fact $\frac{\log z}{\log \ell}-1\leq m_0$, the above inequality is
$$\ll \frac{\pi(x)}{\ell^{m_0}}\frac{\log x}{\log \ell}\\ \ll\frac{x}{z} \frac{\ell}{\log \ell}. 
  $$
  Combining the estimates for both sums in \eqref{dividing the sum},
\begin{equation*}\label{bound of nu_l}
     \sum\limits_{p\in A(x)} {\nu_\ell}(a_f(p)+a_g(p))\ll  \frac{x}{z} \frac{\ell}{\log \ell}. 
\end{equation*}
This analysis together with the definition of  $Q_A(x)$ yield 
\[
    \sum\limits_{p\in A(x)}\log |a_f(p)+a_g(p)| =  \sum\limits_{\ell\in Q_A(x)} \log \ell \left(\sum\limits_{p\in A(x)} {\nu_\ell}(a_f(p)+a_g(p))\right) \ll \frac{x}{z}\sum\limits_{\ell\in Q_A(x)}  \ell.
\]
 Set $y = (\log x)^{1/14} (\log \log x)^{3/7-\epsilon}$. Then by the definition of $A$,  $\ell\le y$ whenever $\ell\in Q_A(x)$. Hence
\[
\sum\limits_{p\in A(x)}\log |a_f(p)+a_g(p)|\ll 
 \frac{x}{z}\sum\limits_{\ell\le y} \ell \ll \frac{x}{z} \frac{y^2}{\log y} \ll \frac{x}{(\log \log x)^{2\epsilon}},
\]
for all large $x$. As the set $A$ has  a positive upper density, the last inequality contradicts Proposition \ref{unbounded} and therefore we complete the proof.

\section{Proof of Theorem \ref{over n}}
For a given $\epsilon>0$, consider the set
  $$\mathcal{P}=\{p:a_f(p)+ a_g(p)\neq 0,\,P(a_f(p)+a_g(p))>(\log p)^{1/14}(\log \log p )^{3/7-\epsilon} \}.$$
  From Theorem \ref{main}, the set $\mathcal{P}$ has density 1. For the proof, we use this set of primes to construct a set of natural numbers of density 1 and have the property as stated.
  Define
  $$\mathcal{Q}=\{n: \exists ~ p\in \mathcal{P} ~{\rm{such ~that}}~p|n,\, p>n^{1/((\log\log n)^{\epsilon})}\}. 
  $$
  We claim that $\mathcal{Q}$ has natural density equal to $1$. Let $x$ be sufficiently large and $x_0=x^{1/(\log\log x)^{\epsilon}}$. We apply Brun's sieve \cite[Thm. 2.1, P. 57]{HR} to say that 
  \begin{equation}\label{brun}
  \left\vert\left\{n\leq x: \bigg(n,\prod_{\substack{ p\in \mathcal{P}\\x_0<p<x^{1/5}}}p\bigg)=1\right\}\right\vert\ll x\prod_{\substack{x_0<p<x^{1/5}\\ p\in \mathcal{P}}}\bigg(1-\frac{1}{p}\bigg)=O(\pi(x)).\end{equation}
  Additionally, one can easily verify that the set
  $$\{n: \sqrt{x}\le n \le x, \, n\not\in \mathcal{Q} \}$$
  is contained inside the leftmost set in \eqref{brun}, and hence using the inequality in \eqref{brun}, we obtain 
  \begin{equation}\label{brun2}
      \vert\{n: \sqrt{x}\le n \le x, \, n\not\in \mathcal{Q} \}\vert = o(x).
  \end{equation}
  This implies
  $$
        \sum_{\substack{n\le x\\ n\not \in \mathcal{Q} }}1 = \sum_{\substack{n\le \sqrt{x}\\ n\not\in \mathcal{Q} }}1 + \sum_{\substack{\sqrt{x}\le n\le x\\ n\not\in \mathcal{Q} }}1 \le \sqrt{x} + o(x) = o(x).
$$
Thus the set $\mathcal{Q}$ has  natural density $1$. Now consider the following subset of $\mathcal{Q}$
\[
    \widetilde{\mathcal{Q}}:= \left\{n : p^2\mid n ~{\rm for ~some~} p\in \mathcal{P} ~{\rm and~} p> n^{\frac{1}{(\log \log n)^\epsilon}}\right\}.
\]
Then 
\[
    |\widetilde{\mathcal{Q}}(x)| = \sum\limits_{\substack{n\le \sqrt{x}\\ p\in \mathcal{P}, p^2\mid n\\ p>n^{\frac{1}{(\log \log n)^\epsilon}}}} 1 + \sum_{\substack{\sqrt x\le n \le x\\  p\in \mathcal{P}, p^2\mid n\\ p>n^{\frac{1}{(\log \log n)^\epsilon}} }}1 < \sqrt{x} + S_1,
\]
where $S_1$ denotes the second sum. Changing the order of summation in $S_1$ and realizing {that $\sqrt{x_0}< n^{\frac{1}{(\log \log n)^\epsilon}}$ for $n\in [\sqrt{x},x]$}, we obtain
\[
        S_1 \le \sum_{p\ge \sqrt{x_0}} \sum_{\substack{\sqrt x\le n \le x\\  p^2\mid n}} 1 \le \sum_{\sqrt{x_0} \le p\le \sqrt{x}} \frac{x}{p^2}= o(x).
 \]
Hence, if we define
$$T=\left\{n: p||n~~{\rm  {for~ some~}} p \in \mathcal P ~{\rm and}~ p> n^{\frac{1}{(\log \log n)^\epsilon}}\right \}$$
where $p||n$ means $\left(\frac{n}{p},p\right)=1$,  then the above analysis give that the subset $T$ of $\mathcal Q$
has natural density equal to 1. 

Now, for $n\in T$, if $(a_f\ast g_f)(n)\neq 0$, then using the fact that convolution of two multiplicative functions is again  multiplicative, we have 
    \[
    P((a_f\ast a_g)(n)) \ge P(a_f(p)+a_g(p)),
    \]
    where  $p\in \mathcal{P}$  with $p\mid\mid n$ and $p> n^{\frac{1}{(\log \log n)^\epsilon}}.$ Thus
    $
         P((a_f\ast a_g)(n)) \ge (\log n)^{1/14} (\log \log n)^{3/7-\epsilon},
    $
    for all sufficiently large $n \in T$ and this completes the proof.

\section{Proof of Corollary \ref{character}}
 Suppose the conclusion in Corollary \ref{character} is not true, i.e., $f$ and $g$ are twist-inequivalent. From  Theorem \ref{main}, it follows that for the given $\epsilon>0$, the set of primes \[
 \{p: |a_f(p)+a_g(p)| \leq (\log p)^{1/14} (\log \log p)^{3/7-\epsilon}\}
\]
has density 0 which is a contradiction to the fact that the set has a positive upper density and hence $f= \chi \otimes g$ for some Dirichlet character $\chi$. Now $f$ and $g$ have trivial nebentypus implies $\chi^2=1$.

\section{Preparation for the proof of Theorem \ref{under GRH}} \label{section: normal order}
In this section, we study the normal order of the prime factors of $a_f(p)+a_g(p)$
for two twist-inequivalent normalized newforms of weight $k\ge 2$. These results play a key role in the proof of Theorem \ref{under GRH} and are also of independent interest.
In \cite{mmp}, it was shown that for normalized newforms of weight $k=2$, the normal order of the prime factors of $a_f(p)+a_g(p)$ is $\log \log p$. Our main result extends this conclusion to normalized newforms of arbitrary weight.

To state this result precisely, let $\omega(n)$ denote the number of distinct prime divisors of $n$ and for $u>0$, let $\omega_u(n)$ count the distinct prime divisors of $n$ up to $u$. We establish the following result.

\begin{theorem}\label{normal order of sums}
Let $f\in S_k(N_1)$ and  $g\in S_k(N_2)$ be two non-CM normalized newforms of weight $k\ge 2$ with trivial nebentypus and integer Fourier coefficients $a_f(n)$ and $a_g(n)$, respectively. 
If $f$ and $g$ are twist-inequivalent, then under GRH, for sufficiently large $x$, we have
\begin{enumerate}
    \item 
    for any $u=x^{\eta}$ with $0<\eta< \frac{1}{14}$, 
     \[
        \sum\limits_{\substack{p\le x\\a_f(p)+a_g(p)\neq 0}} (\omega_u(a_f(p)+a_g(p))-\log \log u)^2 = O(\pi(x) \log \log u).  
    \]
    \item 
     $
        \sum\limits_{\substack{p\le x\\a_f(p)+a_g(p)\neq 0}} (\omega(a_f(p)+a_g(p))-\log \log x)^2 = O(\pi(x) \log \log x). $
\end{enumerate}
\end{theorem}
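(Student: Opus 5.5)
We follow the Turán--Kubilius variance method, with Proposition \ref{asymptotic_pi*}(b) supplying the distribution of $a_f(p)+a_g(p)$ modulo squarefree $h$. Write $s(p)=a_f(p)+a_g(p)$ and restrict throughout to primes $p\le x$ with $s(p)\neq 0$. Expanding the square in (1) gives
\[
\sum_{p}\omega_u(s(p))^2-2\log\log u\sum_{p}\omega_u(s(p))+(\log\log u)^2\,\pi^*(x),
\]
where $\pi^*(x)=\pi(x)+O(x^{13/14})$ by Lemma \ref{same_coefficients}. So it suffices to establish the first and second moments
\[
\sum_p\omega_u(s(p))=\pi(x)\log\log u+O(\pi(x)),\qquad \sum_p\omega_u(s(p))^2=\pi(x)(\log\log u)^2+O(\pi(x)\log\log u).
\]
Once these hold, the $(\log\log u)^2$ terms cancel and the remainder is $O(\pi(x)\log\log u)$.

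\emph{First moment.} Interchanging sums, $\sum_p\omega_u(s(p))=\sum_{\ell\le u}\pi^*_{f,g}(x,\ell)$. By Proposition \ref{asymptotic_pi*}(b) this equals
\[
\pi(x)\sum_{\ell\le u}\delta(\ell)+O\Big(\sum_{\ell\le u}\big(\ell^6x^{1/2}\log x+x^{13/14}\big)\Big).
\]
Proposition \ref{asymptotic_delta} gives $\delta(\ell)=1/\ell+O(1/\ell^2)$, so by Mertens the main term is $\pi(x)(\log\log u+O(1))$. The error is $O(u^7x^{1/2}\log x+u\,x^{13/14})$, which is $o(\pi(x))$ provided $u^7\le x^{1/2-\epsilon}$ and $u\le x^{1/14-\epsilon}$. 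Both hold for $u=x^\eta$ with $\eta<1/14$, and this is where that hypothesis enters.

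\emph{Second moment.} Write $\sum_p\omega_u^2=\sum_p\omega_u+\sum_{\ell_1\neq\ell_2\le u}\pi^*_{f,g}(x,\ell_1\ell_2)$. For large distinct primes, Proposition \ref{delta_multiplicative_large_prime} gives $\delta(\ell_1\ell_2)=\delta(\ell_1)\delta(\ell_2)$. For the finitely many exceptional small primes we only need $\delta(\ell_1\ell_2)\ll 1/(\ell_1\ell_2)$ to bound their contribution by $O(\log\log u)$; this should follow as in Proposition \ref{asymptotic_delta}, or we could simply discard primes below a fixed constant, which changes $\omega_u$ by $O(1)$. The main term is then $\pi(x)\big(\sum_{\ell\le u}\delta(\ell)\big)^2+O(\pi(x)\log\log u)=\pi(x)(\log\log u)^2+O(\pi(x)\log\log u)$.

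The error terms are the main obstacle. Applying Proposition \ref{asymptotic_pi*}(b) with $h=\ell_1\ell_2$ costs $h^6x^{1/2}$ per pair, so the total is about $u^{14}x^{1/2}$. The additive $O(x^{13/14})$ per pair totals $u^2x^{13/14}$, which is too large. I would remove it by working with $\pi_{f,g}(x,h)$ from Proposition \ref{asymptotic_pi}(b), which has no such term, and discarding the $O(x^{13/14})$ primes with $s(p)=0$ only once, paying $\omega_u(0)^2\ll u^2$ for each. That crude bound fails, so instead I would define $\omega_u(s(p))$ as $0$ when $s(p)=0$, which makes the correction exactly zero in the moment identities. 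The $u^{14}x^{1/2}$ term then still forces roughly $\eta<1/28$. To reach the full range $\eta<1/14$, I would truncate the second moment to $\ell_1,\ell_2\le v=x^{1/28-\epsilon}$. The remaining primes $\ell\in(v,u]$ number $O(1)$ per $p$, since $|s(p)|\le 4x^{(k-1)/2}$ has $O(1)$ prime factors larger than $x^{c}$. Their contribution to the variance is $O(\pi(x))$ via Cauchy--Schwarz and the first-moment estimate, because $\sum_{v<\ell\le u}1/\ell=O(1)$.

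\emph{Part (2).} Take $u=x^{1/15}$. Since $|s(p)|\ll x^{(k-1)/2}$, we have $\omega(s(p))-\omega_u(s(p))\le (k-1)/(2\cdot\tfrac1{15})+O(1)=O(1)$, and $\log\log x-\log\log u=O(1)$. Therefore $(\omega-\log\log x)^2\le 2(\omega_u-\log\log u)^2+O(1)$, and (2) follows from (1).
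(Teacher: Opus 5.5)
Your argument is correct and is essentially the paper's route. The paper only verifies the hypotheses of the Tur\'an-type framework of \cite{mmp}: the $O(x^{13/14})$ bound for the zero set, the first-moment error over $\ell\le x^{\eta}$ with $\eta<1/14$, $\delta(\ell)=1/\ell+O(1/\ell^2)$, and the pair-sum error over $\ell_1\neq\ell_2\le x^{\eta_1}$ with $\eta_1<1/28$ (exactly your truncation at $v$); you have carried out that framework explicitly.

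One slip, which does not affect the conclusion: declaring $\omega_u(0)=0$ does not make the zero-set correction vanish, since $\pi_{f,g}(x,\ell_1\ell_2)$ still counts every prime with $a_f(p)+a_g(p)=0$. You never need it, though. After truncating to $\ell_1,\ell_2\le v=x^{1/28-\epsilon}$, the accumulated $O(x^{13/14})$ terms from Proposition \ref{asymptotic_pi*}(b) total $O(v^2x^{13/14})=O(x^{1-2\epsilon})=o(\pi(x))$, so the crude bound you discarded already suffices.
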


\begin{proof}
The proof follows the method of \cite{mmp}, which is inspired by Turán's original ideas \cite{Turan}. We show that the sequence $a_f(p)+a_g(p)$ and its counting function $\pi_{f,g}$, defined in \eqref{pi_definition}, satisfy all conditions from \cite[Sec. 2]{mmp}.
\begin{enumerate}
    \item[(1)] 
    Using \eqref{deligne bound}, for a prime $p$,
$|a_f(p)+a_g(p)|\le 4p^{\frac{k-1}{2}}$.
\item[(2)] 
By Lemma \ref{same_coefficients}, under GRH,
    $\lvert \{p\le x: a_f(p) +a_g(p)=0\}\rvert = O(x^{1-\eta})$ with $\eta < 1/14$.
\item[(3)] 
From Proposition \ref{cdt v1}, under GRH, we see that 
\[  
    \sum\limits_{\ell\le x^{\eta}} |\pi_{f,g}(x,\ell) - \delta(\ell)\pi(x) |\ll \frac{x }{\log x}. 
    \]
\item[(4)] By the first part of Proposition \ref{asymptotic_delta},
$\sum_{\ell}\left\lvert\delta(\ell)-\frac{1}{\ell}\right\rvert=O(1).$
\item[(5)]
Using Proposition \ref{delta_multiplicative_large_prime} and Proposition \ref{cdt v1}, under GRH,
\[  
    \sum\limits_{\ell_1, \ell_2\le x^{\eta_1}, \ell_1\neq \ell_2} |\pi_{f,g}(x,\ell_1\ell_2) - \delta(\ell_1)\delta(\ell_2)\pi(x) |\ll \frac{x }{\log x}, \quad \text{ for } \eta_1 < 1/28. 
    \]
\end{enumerate}
Since all hypotheses of \cite[Sec. 2]{mmp} are satisfied, the proof is complete.
\end{proof}

As a consequence of Theorem \ref{normal order of sums}, we prove that $\omega(a_f(p)+a_g(p))$ has normal order $\log \log p$, where $p$ runs over the set of primes. More precisely,
\begin{corollary}\label{over p}
With notation as above, we have
 \begin{equation}\label{proposition 5.5 rammurty}
     \sum\limits_{\substack{p\le x\\a_f(p)+ a_g(p)\neq 0}} (\omega(a_f(p)+a_g(p))-\log \log p)^2  = O(\pi(x) \log \log x). 
 \end{equation}
 \end{corollary}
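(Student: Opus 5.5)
The plan is to derive Corollary \ref{over p} from part (2) of Theorem \ref{normal order of sums} by replacing $\log\log x$ with $\log\log p$. The replacement costs little because, for most primes $p\le x$, the two quantities are very close.

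Write $\omega_p:=\omega(a_f(p)+a_g(p))$. The elementary inequality $(a+b)^2\le 2a^2+2b^2$ gives
\[
(\omega_p-\log\log p)^2\le 2(\omega_p-\log\log x)^2+2(\log\log x-\log\log p)^2 .
\]
Summing over $p\le x$ with $a_f(p)+a_g(p)\neq 0$, the first term contributes $O(\pi(x)\log\log x)$ by Theorem \ref{normal order of sums}(2). It therefore remains to show
\[
\sum_{p\le x}(\log\log x-\log\log p)^2=O(\pi(x)\log\log x);
\]
in fact the bound $O(\pi(x))$ holds.

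To prove this, split the primes at $\sqrt{x}$.

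For $\sqrt{x}<p\le x$ we have $\log p>\tfrac12\log x$. Hence $0\le \log\log x-\log\log p<\log 2$, and these primes contribute $O(\pi(x))$.

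For $p\le\sqrt{x}$ we use the trivial bound $(\log\log x-\log\log p)^2\le(\log\log x)^2$ (take $p\ge 3$; the single prime $p=2$ contributes $O((\log\log x)^2)$). The total contribution is then at most $\pi(\sqrt{x})(\log\log x)^2\ll \sqrt{x}(\log\log x)^2=o(\pi(x))$.

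Alternatively, partial summation gives $\sum_{p\le x}\log^2(\log x/\log p)=O(\pi(x))$ directly. Combining the pieces yields \eqref{proposition 5.5 rammurty}.

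There is no real obstacle here: all the arithmetic input is already in Theorem \ref{normal order of sums}(2), which holds under GRH, and the rest is routine. The only point needing care is the range of small primes, where $\log\log p$ differs substantially from $\log\log x$. These primes are too few to matter, as shown above.
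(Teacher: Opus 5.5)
Your proposal is correct and follows essentially the paper's argument. The paper expands the square and controls the cross term by Cauchy--Schwarz, whereas your inequality $(a+b)^2\le 2a^2+2b^2$ absorbs that term directly; your split at $\sqrt{x}$ also supplies the ``elementary argument'' for $\sum_{p\le x}(\log\log x-\log\log p)^2\ll\pi(x)$, which the paper leaves implicit.
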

\begin{proof}
For the proof, we begin by expressing
     $$\omega(a_f(p)+a_g(p))-\log \log p = (\omega(a_f(p)+a_g(p))-\log \log x) + (\log \log x - \log \log p).$$ 
     This representation allows us to rewrite the left-hand side of the claimed sum as
     \begin{align*}\label{eq 2:proposition 5.5 rmv}
       \sum\limits_{\substack{p\le x\\a_f(p)+a_g(p)\neq 0}} (\omega(a_f(p)+a_g(p))-\log \log x)^2 + \sum\limits_{\substack{p\le x\\a_f(p)+a_g(p)\neq 0}} (\log \log x - \log \log p)^2 \\  \nonumber + 2 \sum\limits_{\substack{p\le x\\a_f(p)+a_g(p)\neq 0}} (\omega(a_f(p)+a_g(p))-\log \log x) (\log \log x - \log \log p).
        \end{align*}
Next, using  Theorem \ref{normal order of sums}, the first sum is bounded by 
 $\pi(x) \log \log x $, while an elementary argument shows that the second sum satisfies $\ll \pi(x)$. 
 Applying the Cauchy-Schwarz inequality to the third sum, we find that it satisfies the desired estimate, completing the proof.
\end{proof}

\section{Proof of Theorem \ref{under GRH}}\label{sec: proof uner grh}
Let $F$ 
be a monotone increasing function on $\R$, chosen later, with $F(x)> 14$ and $F(x) = O(\log x)$. Define $$y=y(x)=x^{\frac{1}{F(x)}}.$$
 From Theorem \ref{normal order of sums}, we have 
    \begin{equation}\label{normal order}
    \sum\limits_{\substack{x/2\le p\le x\\a_f(p)+ a_g(p)\neq 0}} (\omega_y(a_f(p)+a_g(p))-\log \log y)^2 \ll \pi(x)\log \log y.
    \end{equation}
This implies that for every $\delta >0$, the number of primes $\frac{x}{2}\leq p \leq x$ that do not satisfy the inequality 
    \begin{equation}\label{normal order 2}
      |\omega_y(a_f(p)+a_g(p))-\log \log y| < (\log \log y)^{1/2+\delta}
    \end{equation}
is $o(\pi(x))$.
   For a prime $p \in [\frac{x}{2},x]$ which satisfies the inequality \eqref{normal order 2},  let  $z= p^{1/F(p)}$. Hence, for the given  $\epsilon$ and for almost all $p\in [\frac{x}{2},x]$, 
 $$\omega_y(a_f(p)+a_g(p))\leq \log \log y +(\log \log y)^{1/2+\epsilon}.$$
Since  $F$ is an increasing function with $F(x)>14$, it follows that $\log \log z \leq \log \log p$. Combining this with the above inequality and together with the fact that at the prime $p,~ y=z$, we obtain
 $$\omega_z(a_f(p)+a_g(p))\leq \log \log z +(\log \log p)^{1/2+\epsilon}.$$
Using Corollary \ref{over p} and a argument similar to \eqref{normal order 2}, we also have
$$\omega(a_f(p)+a_g(p))> \log \log p -(\log \log p)^{1/2+\epsilon}$$
for almost all $p$. 
Thus, for almost all $p$, the number of distinct prime divisors of $a_f(p)+a_g(p)$ that are greater than $z$ is at least
$$\log \log p -(\log \log p)^{1/2+\epsilon}-\log \log z -(\log \log p)^{1/2+\epsilon}=\log \frac{\log p}{\log z}-2(\log \log p)^{1/2+\epsilon}.$$

Now, choose $F$ such that $\log F(p)=3(\log \log p)^{1/2+\epsilon}$. Using the definition of $z$, we obtain
\begin{equation}\label{final}
    \log \frac{\log p}{\log z}-2(\log \log p)^{1/2+\epsilon}=\log F(p)-2(\log \log p)^{1/2+\epsilon}=(\log \log p)^{1/2+\epsilon}.
\end{equation}
Since this quantity is positive, it follows that 
 $P(a_f(p)+a_g(p))\geq z$. This completes the proof of part $(a)$.

For part $(b)$, the conclusion follows from the fact that $|a_f(p)+a_g(p)|$ has at least $(\log \log p)^{1/2+\epsilon}$ many distinct prime divisors greater than $
z$, as established in \eqref{final}.

\section*{Acknowledgement}
The first author’s research is supported by the Science and Engineering Research Board, India, through grant SRG/2023/000228. 

\bibliography{ref}
\bibliographystyle{alpha}

\end{document}